\DeclareRobustCommand{\rvdots}{%
  \vbox{
    \baselineskip4\p@\lineskiplimit\z@
    \kern-\p@
    \hbox{.}\hbox{.}\hbox{.}
  }}
\newcommand{\Mod}[1]{\ (\textup{mod}\ #1)}
\def\moverlay{\mathpalette\mov@rlay}
\def\mov@rlay#1#2{\leavevmode\vtop{%
   \baselineskip\z@skip \lineskiplimit-\maxdimen
   \ialign{\hfil$\m@th#1##$\hfil\cr#2\crcr}}}
\newcommand{\charfusion}[3][\mathord]{
    #1{\ifx#1\mathop\vphantom{#2}\fi
        \mathpalette\mov@rlay{#2\cr#3}
      }
    \ifx#1\mathop\expandafter\displaylimits\fi}
\theoremstyle{plain} 
\newtheorem{theorem}{\indent\sc Theorem}[section]
\newtheorem{lemma}[theorem]{\indent\sc Lemma}
\newtheorem{corollary}[theorem]{\indent\sc Corollary}
\newtheorem{proposition}[theorem]{\indent\sc Proposition}
\theoremstyle{definition} 
\newtheorem{definition}[theorem]{\indent\sc Definition}
\newtheorem{remark}[theorem]{\indent\sc Remark}
\def\address#1#2{\begingroup
\noindent\parbox[t]{7.8cm}{%
\small{\scshape\ignorespaces#1}\par\vskip1ex
\noindent\small{\itshape E-mail address}%
\/: #2\par\vskip4ex}\hfill%
\endgroup}%
\title{Gauss's form class groups and Shimura's canonical models}
\author{
\textsc{Ja Kyung Koo, Dong Hwa Shin, and Dong Sung Yoon$^*$} 
}
\date{} 
\begin{document}

\allowdisplaybreaks

\maketitle

\footnote{ 
2020 \textit{Mathematics Subject Classification}. Primary 11R37; Secondary 11E12, 11R65.}
\footnote{ 
\textit{Key words and phrases}. Canonical models, class field theory, form class groups, ideal class groups,
modular curves.} \footnote{
\thanks{
$^*$Corresponding author.\\
The second named author was supported
by Hankuk University of Foreign Studies Research Fund of 2024 and by the National Research Foundation of Korea (NRF) grant funded by the Korea government (MSIT)
(No. RS-2023-00241953).
}
}

\begin{abstract}
Let $N$ be a positive integer and $\Gamma$ be a subgroup of $\mathrm{SL}_2(\mathbb{Z})$ containing $\Gamma_1(N)$. 
Let $K$ be an imaginary quadratic field and $\mathcal{O}$ be an order of discriminant $D_\mathcal{O}$ in $K$.  
Under some assumptions, we show that 
$\Gamma$ induces a form class group of discriminant $D_\mathcal{O}$ (or of order $\mathcal{O}$) and level $N$
if and only if there is a certain canonical model of the modular curve for $\Gamma$ defined over
a suitably small number field. In this way we can find an interesting link between two different subjects,
which will be useful in the study of certain quadratic Diophantine equations in terms of primes $p$. 
\end{abstract}

\tableofcontents

\section {Introduction}

For a negative integer $D$ such that $D\equiv0$ or $1\Mod{4}$,
let $\mathcal{Q}(D)$ be the set of primitive positive definite binary quadratic forms of discriminant $D$. 
The natural action of $\mathrm{SL}_2(\mathbb{Z})$ on $\mathcal{Q}(D)$ 
defines the proper equivalence $\sim$ on $\mathcal{Q}(D)$, which yields  
the set of equivalence classes $\mathcal{C}(D)=\mathcal{Q}(D)/\sim$. 
Gauss' direct composition or Dirichlet's composition makes 
 $\mathcal{C}(D)$ an abelian group, so-called the \textit{form class group of discriminant $D$}
 (cf. \cite[$\S$3.A]{Cox}). Furthermore, if $\mathcal{O}$ is the order of discriminant $D_\mathcal{O}=D$ 
in the imaginary quadratic field $K=\mathbb{Q}(\sqrt{D})$, then the form class group $\mathcal{C}(D)$ is isomorphic to
the $\mathcal{O}$-ideal class group $\mathcal{C}(\mathcal{O})$ (cf. \cite[$\S$7.B]{Cox}).  
\par
For a positive integer $N$, let $\mathcal{Q}(D,\,N)$ be the subset of $\mathcal{Q}(D)$ consisting of
forms $ax^2+bxy+cy^2$ with $\gcd(a,\,N)=1$. In a similar way to $\mathrm{SL}_2(\mathbb{Z})$, the congruence subgroup
\begin{equation*}
\Gamma_1(N)=\left\{\gamma\in\mathrm{SL}_2(\mathbb{Z})~|~\gamma\equiv\begin{bmatrix}1&\mathrm{*}\\
0&1\end{bmatrix}\Mod{NM_2(\mathbb{Z})}\right\}
\end{equation*} 
acts on $\mathcal{Q}(D,\,N)$ and gives rise to an equivalence relation $\sim_{\Gamma_1(N)}$ on $\mathcal{Q}(D,\,N)$. 
In \cite{J-K-S-Y23} Jung et al. verified that the set of equivalence classes $\mathcal{C}(D,\,N)=\mathcal{Q}(D,\,N)/\sim_{\Gamma_1(N)}$
can be given a binary operation so that it turns out to be an abelian group isomorphic to the ray class group 
$\mathcal{C}(\mathcal{O},\,N)$ of order $\mathcal{O}$ modulo $N\mathcal{O}$, which
makes the diagram in Figure \ref{compatible} commute. 
\begin{figure}[H]
\begin{equation*}
\xymatrixcolsep{10pc}
\xymatrix{
\mathcal{C}(D,\,N) \ar@{->}[r]^{\sim}
\ar@{->>}[dd]_{\textrm{the natural surjection}}
 & \mathcal{C}(\mathcal{O},\,N) \ar@{->}[dd]^{\textrm{the natural homomorphism}} \\\\
\mathcal{C}(D) \ar@{->}[r]^\sim & \mathcal{C}(\mathcal{O})
}
\end{equation*}
\caption{A diagram of class groups}
\label{compatible}
\end{figure}
\par
More generally, let $\Gamma$ be a subgroup of $\mathrm{SL}_2(\mathbb{Z})$ containing $\Gamma_1(N)$. 
Jung et al. also noticed in \cite{J-K-S-Y23-2} that
if there exists a canonical model of the modular curve for $\Gamma$ defined over $\mathbb{Q}$
whose
$\mathbb{Q}$-rational functions are essentially the meromorphic modular functions for $\Gamma$ with 
rational Fourier coefficients, then
$\Gamma$ induces a form class group of discriminant $D_\mathcal{O}$ (or of order $\mathcal{O}$) and level $N$ in the sense of 
Definition \ref{induce}. This assertion can be applied to the subgroup
\begin{equation*}
\Gamma_G=\left\{
\gamma\equiv\begin{bmatrix}t&\mathrm{*}\\0&\mathrm{*}\end{bmatrix}\Mod{NM_2(\mathbb{Z})}~
\textrm{for some $t\in\mathbb{Z}$ such that $t+N\mathbb{Z}\in G$}\right\}
\end{equation*}
of $\mathrm{SL}_2(\mathbb{Z})$, where $G$ is a subgroup of $(\mathbb{Z}/N\mathbb{Z})^\times$ (\cite[Proposition 5.3,
Theorems 3.5 and 5.4]{J-K-S-Y23-2}),
in order to solve the quadratic Diophantine equations $x^2+ny^2=p$ with $x+N\mathbb{Z}\in G$
and $y\equiv0\Mod{N}$ ($n\in\mathbb{Z}$) (\cite[Lemma 6.2 and Theorem 6.4]{J-K-S-Y23-2}).
\par
Now, let $\zeta_N=e^{2\pi\mathrm{i}/N}$. 
In this paper, we shall reveal a close relationship between form class groups and canonical models. 
More precisely, we shall prove that under some mild assumptions
\begin{eqnarray*}
&&\textrm{$\Gamma$ induces a form class group of discriminant $D_\mathcal{O}$ and level $N$}\\
&\Longleftrightarrow&\textrm{there is a canonical model of the modular curve for $\Gamma$ defined over a subfield $L$}\\
&&\textrm{of $\mathbb{Q}(\zeta_N)\cap\mathbb{Q}(\sqrt{D_\mathcal{O}})$ whose $L$-rational functions are essentially}\\
&&\textrm{the meromorphic modular functions for $\Gamma$ with Fourier coefficients in $L$} 
\end{eqnarray*}
(Theorems \ref{S} and \ref{main}). 

\section {Form class groups of level $N$}\label{defineFCG}

We shall give the definition of a form class group of 
discriminant $D_\mathcal{O}$ and level $N$, recently adopted
in \cite{J-K-S-Y23-2}. 
\par
Throughout this paper, we let $K$ be an imaginary quadratic field with 
ring of integers $\mathcal{O}_K$ and 
$\mathcal{O}$ be an order of discriminant $D_\mathcal{O}$ in $K$. 
Let $I(\mathcal{O})$ be the group of proper fractional $\mathcal{O}$-ideals and
$P(\mathcal{O})$ be its subgroup of principal fractional $\mathcal{O}$-ideals.
For a positive integer $N$, we let $I(\mathcal{O},\,N)$ and $P(\mathcal{O},\,N)$ be the
subgroups of $I(\mathcal{O})$ and $P(\mathcal{O})$ defined by 
\begin{align*}
I(\mathcal{O},\,N)&=\langle
\mathfrak{a}~|~\textrm{$\mathfrak{a}$ is a nonzero proper $\mathcal{O}$-ideal prime to $N$}\rangle,\\
P(\mathcal{O},\,N)&=\langle\nu\mathcal{O}~|~\textrm{$\nu$ is a nonzero element of $\mathcal{O}$
such that $\nu\mathcal{O}$ is prime to $N$}\rangle,
\end{align*}
respectively. 
Furthermore, let $P_{1,\,N}(\mathcal{O},\,N)$ be the subgroup of $P(\mathcal{O},\,N)$ given by 
\begin{equation*}
P_{1,\,N}(\mathcal{O},\,N)=\langle\nu\mathcal{O}~|~\textrm{$\nu$ is a nonzero element 
of $\mathcal{O}$ satisfying}~\nu\equiv 1\Mod{N\mathcal{O}}\rangle.
\end{equation*}
We denote its associated quotient group by 
\begin{equation*}
\mathcal{C}(\mathcal{O},\,N)=I(\mathcal{O},\,N)/P_{1,\,N}(\mathcal{O},\,N).
\end{equation*}
Let $\mathcal{Q}(D_\mathcal{O},\,N)$ be the set of binary quadratic forms given by
\begin{equation*}
\mathcal{Q}(D_\mathcal{O},\,N)=\left\{
Q(x,\,y)=Q\left(\begin{bmatrix}x\\y\end{bmatrix}
\right)=ax^2+bxy+cy^2\in\mathbb{Z}[x,\,y]~\Bigg|~\begin{array}{l}\gcd(a,\,b,\,c)=1,\\
b^2-4ac=D_\mathcal{O},~a>0,\\
\gcd(a,\,N)=1\end{array}
\right\}.
\end{equation*}
Each congruence subgroup $\Gamma$ of $\mathrm{SL}_2(\mathbb{Z})$
defines the equivalence relation $\sim_\Gamma$ on the set $\mathcal{Q}(D_\mathcal{O},\,N)$
as follows\,: for $Q,\,Q'\in\mathcal{Q}(D_\mathcal{O},\,N)$
\begin{equation*}
Q\sim_\Gamma Q'\quad\Longleftrightarrow\quad
Q'\left(\begin{bmatrix}x\\y\end{bmatrix}
\right)=
Q\left(\gamma\begin{bmatrix}x\\y\end{bmatrix}\right)~\textrm{for some}~\gamma\in\Gamma. 
\end{equation*}
Then by $\mathcal{C}_\Gamma(D_\mathcal{O},\,N)$ we mean
 the set of equivalence classes, namely, 
\begin{equation*}
\mathcal{C}_\Gamma(D_\mathcal{O},\,N)=\mathcal{Q}(D_\mathcal{O},\,N)/\sim_\Gamma.
\end{equation*}
For any $Q(x,\,y)=ax^2+bxy+cy^2\in\mathcal{Q}(D_\mathcal{O},\,N)$, let $\omega_Q$ be the
zero of the quadratic polynomial $Q(x,\,1)$ which lies in the complex upper half-plane $\mathbb{H}=\{\tau\in\mathbb{C}~|~
\mathrm{Im}(\tau)>0\}$, that is,
\begin{equation*}
\omega_Q=\frac{-b+\sqrt{D_\mathcal{O}}}{2a}. 
\end{equation*}
For the principal form 
\begin{equation*}
Q_\mathcal{O}(x,\,y)=a_\mathcal{O}x^2+b_\mathcal{O}xy+c_\mathcal{O}y^2=
\left\{\begin{array}{ll}
\displaystyle x^2+xy+\frac{1-D_\mathcal{O}}{4}y^2 & \textrm{if}~D_\mathcal{O}\equiv1\Mod{4},\vspace{0.1cm}\\
\displaystyle x^2-\frac{D_\mathcal{O}}{4}y^2 & \textrm{if}~D_\mathcal{O}\equiv0\Mod{4}\\
\end{array}\right.
\end{equation*}
of discriminant $D_\mathcal{O}$, we also write
$\tau_\mathcal{O}$ for $\omega_{Q_\mathcal{O}}$. Note that the lattice
$[\omega_Q,\,1]=\mathbb{Z}\omega_Q+\mathbb{Z}$ in $\mathbb{C}$ belongs
to $I(\mathcal{O},\,N)$ and 
$[\tau_\mathcal{O},\,1]=\mathcal{O}$ (cf. \cite[Lemmas 7.2, 7.5 and (7.16)]{Cox}).

\begin{proposition}\label{formclassgroup}
The set $\mathcal{C}_{\Gamma_1(N)}(D_\mathcal{O},\,N)$ can be equipped with
a unique group structure so that the mapping
\begin{equation*}
\begin{array}{rccc}
\rho_{D_\mathcal{O},\,N}:&\mathcal{C}_{\Gamma_1(N)}(D_\mathcal{O},\,N)&\rightarrow&\mathcal{C}(\mathcal{O},\,N),\\
&[Q]&\mapsto&[[\omega_Q,\,1]]
\end{array}
\end{equation*}
becomes a well-defined isomorphism. 
\end{proposition}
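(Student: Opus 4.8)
The plan is to realize $\rho_{D_\mathcal{O},N}$ as the descent of a map defined directly on forms and then to pull back the group structure of the ray class group through it. Concretely, I would first introduce $\phi:\mathcal{Q}(D_\mathcal{O},N)\to\mathcal{C}(\mathcal{O},N)$, $Q\mapsto[[\omega_Q,1]]$, which makes sense because $[\omega_Q,1]\in I(\mathcal{O},N)$ as already noted. I would then establish three facts: (i) $\phi$ is constant on $\sim_{\Gamma_1(N)}$-classes, so that it factors through a map $\rho_{D_\mathcal{O},N}$; (ii) this induced map is injective; and (iii) it is surjective. Once $\rho_{D_\mathcal{O},N}$ is known to be a bijection, the group structure on $\mathcal{C}_{\Gamma_1(N)}(D_\mathcal{O},N)$ is defined by transport of structure, $[Q_1]\ast[Q_2]:=\rho_{D_\mathcal{O},N}^{-1}(\rho_{D_\mathcal{O},N}([Q_1])\cdot\rho_{D_\mathcal{O},N}([Q_2]))$; this is visibly the unique operation turning $\rho_{D_\mathcal{O},N}$ into a homomorphism, which yields the asserted uniqueness.

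For (i), write $\gamma=\begin{bmatrix}p&q\\r&s\end{bmatrix}\in\Gamma_1(N)$ and $Q'(x,y)=Q(\gamma\begin{bmatrix}x\\y\end{bmatrix})$. A direct computation with the root in $\mathbb{H}$ gives $\omega_{Q'}=\gamma^{-1}\omega_Q$, whence $[\omega_{Q'},1]=(p-r\omega_Q)^{-1}[\omega_Q,1]$, since $\gamma^{-1}=\begin{bmatrix}s&-q\\-r&p\end{bmatrix}\in\mathrm{SL}_2(\mathbb{Z})$ sends the basis $(\omega_Q,1)$ to $(s\omega_Q-q,\,p-r\omega_Q)=(p-r\omega_Q)(\omega_{Q'},1)$. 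It then remains to check that the multiplier $\lambda:=p-r\omega_Q$ is trivial in $\mathcal{C}(\mathcal{O},N)$. Using $\omega_Q=(-b+\sqrt{D_\mathcal{O}})/2a$ together with $b\equiv D_\mathcal{O}\Mod{2}$ one finds $a\lambda\in\mathcal{O}$, and since $r\equiv0$ and $p\equiv1\Mod{N}$ in fact $a\lambda\equiv a\Mod{N\mathcal{O}}$. Here I would isolate the elementary lemma that for $\xi,\xi'\in\mathcal{O}$ prime to $N$ with $\xi\equiv\xi'\Mod{N\mathcal{O}}$ one has $[\xi\mathcal{O}]=[\xi'\mathcal{O}]$ in $\mathcal{C}(\mathcal{O},N)$ (choosing $c\in\mathcal{O}$ with $c\xi'\equiv1\Mod{N\mathcal{O}}$, the elements $\mu=c\xi$ and $\nu=c\xi'$ are $\equiv1\Mod{N\mathcal{O}}$ and give $\xi/\xi'=\mu/\nu$). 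Applied to $\xi=a\lambda$ and $\xi'=a$ this shows $[\lambda\mathcal{O}]=[a\lambda\mathcal{O}][a\mathcal{O}]^{-1}=1$, so $[[\omega_{Q'},1]]=[[\omega_Q,1]]$.

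For injectivity I would reverse this computation: if $[[\omega_Q,1]]=[[\omega_{Q'},1]]$ then $[\omega_{Q'},1]=\lambda[\omega_Q,1]$ for some $\lambda\in K^\times$ with $\lambda\mathcal{O}\in P_{1,N}(\mathcal{O},N)$, and comparing the bases of the homothetic lattices $(\omega_{Q'},1)$ and $(\lambda\omega_Q,\lambda)$ produces $\gamma\in\mathrm{SL}_2(\mathbb{Z})$ relating $\omega_{Q'}$ and $\omega_Q$; the condition $\lambda\equiv1\Mod{N\mathcal{O}}$ then forces $\gamma\in\Gamma_1(N)$, i.e. $Q\sim_{\Gamma_1(N)}Q'$. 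For surjectivity I would represent a given class by a primitive integral $\mathcal{O}$-ideal $\mathfrak{a}$ prime to $N$; since $\mathfrak{a}+N\mathcal{O}=\mathcal{O}$, I can select an oriented $\mathbb{Z}$-basis $(\beta,\alpha)$ of $\mathfrak{a}$ (with $\mathrm{Im}(\beta/\alpha)>0$) satisfying $\alpha\equiv1\Mod{N\mathcal{O}}$, and then $\tau=\beta/\alpha\in\mathbb{H}$ equals $\omega_Q$ for an explicit $Q\in\mathcal{Q}(D_\mathcal{O},N)$ with $[\omega_Q,1]=\alpha^{-1}\mathfrak{a}$, so $[[\omega_Q,1]]=[\mathfrak{a}]$ because $\alpha\mathcal{O}\in P_{1,N}(\mathcal{O},N)$. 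The case $N=1$ of all this is exactly the classical isomorphism $\mathcal{C}(D_\mathcal{O})\cong\mathcal{C}(\mathcal{O})$ of \cite[$\S$7.B]{Cox}, and the heart of the matter is the level-$N$ bookkeeping: every multiplier that appears must be controlled modulo $N\mathcal{O}$ so that it lands in $P_{1,N}(\mathcal{O},N)$ rather than merely in $P(\mathcal{O})$. I expect the main obstacle to be the surjectivity step, specifically arranging a basis of $\mathfrak{a}$ whose distinguished vector $\alpha$ is simultaneously $\equiv1\Mod{N\mathcal{O}}$, primitive in $\mathfrak{a}$, and correctly oriented; compatibility with the coarser isomorphism of Figure \ref{compatible} then serves as a consistency check.
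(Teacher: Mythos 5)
The paper itself contains no proof of this proposition---its ``proof'' is just the citation to \cite[Definition 5.7 and Theorem 9.4]{J-K-S-Y23}---so your attempt can only be measured against the standard form--ideal dictionary that the cited work develops, and your plan is exactly that dictionary: descend $Q\mapsto[[\omega_Q,\,1]]$ to $\sim_{\Gamma_1(N)}$-classes, prove bijectivity, and transport the group law (which indeed gives uniqueness for free). Your well-definedness step is complete and correct: $\omega_{Q^\gamma}=\gamma^{-1}(\omega_Q)$, the multiplier $\lambda=p-r\omega_Q$ satisfies $a\lambda\in\mathcal{O}$ and $a\lambda\equiv a\Mod{N\mathcal{O}}$, and your auxiliary lemma that $\xi\equiv\xi'\Mod{N\mathcal{O}}$ with both prime to $N$ forces $[\xi\mathcal{O}]=[\xi'\mathcal{O}]$ is true and is the right tool.

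The other two steps, however, have genuine gaps at their crux points. For injectivity, from $[[\omega_Q,\,1]]=[[\omega_{Q'},\,1]]$ you may conclude $[\omega_{Q'},\,1]=\lambda[\omega_Q,\,1]$ with $\lambda\mathcal{O}\in P_{1,\,N}(\mathcal{O},\,N)$, but this does \emph{not} give ``$\lambda\equiv1\Mod{N\mathcal{O}}$'': the group $P_{1,\,N}(\mathcal{O},\,N)$ is only \emph{generated} by ideals $\nu\mathcal{O}$ with $\nu\equiv1\Mod{N\mathcal{O}}$, so all you know is $\lambda\mathcal{O}=(\mu\mathcal{O})(\nu\mathcal{O})^{-1}$ with $\mu\equiv\nu\equiv1\Mod{N\mathcal{O}}$, whence $\lambda$ is a unit times $\mu/\nu$ and in general is not even in $\mathcal{O}$. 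The repair is to clear denominators ($\nu[\omega_{Q'},\,1]=\mu[\omega_Q,\,1]$ as lattices, units being absorbed), compare positively oriented bases to produce $\gamma=\left[\begin{smallmatrix}p&q\\r&s\end{smallmatrix}\right]\in\mathrm{SL}_2(\mathbb{Z})$ with $\omega_Q=\gamma(\omega_{Q'})$ and $\mu=\nu\,(r\omega_{Q'}+s)$, and then---this is the substantive computation your sketch merely asserts---reduce $a'\mu=\nu(r\,a'\omega_{Q'}+sa')$ modulo $N\mathcal{O}$ in the $\mathbb{Z}$-basis $\{1,\,a'\omega_{Q'}\}$ of $\mathcal{O}$ to get $N\mid r$ and $s\equiv1\Mod{N}$, hence $p\equiv1\Mod{N}$ from $ps-qr=1$, so $\gamma\in\Gamma_1(N)$. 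For surjectivity, as you yourself flag, $\mathfrak{a}+N\mathcal{O}=\mathcal{O}$ only yields \emph{some} $\alpha\in\mathfrak{a}$ with $\alpha\equiv1\Mod{N\mathcal{O}}$, and such an $\alpha$ need not be primitive in $\mathfrak{a}$; one must add a CRT adjustment (write $\alpha=m\beta_0+n\alpha_0$ in a basis of $\mathfrak{a}$, note $\gcd(m,\,n,\,N)=1$, and replace $(m,\,n)$ by a coprime pair congruent to it modulo $N$, which changes $\alpha$ only by an element of $N\mathfrak{a}\subseteq N\mathcal{O}$), and one also needs the preliminary fact that every class of $\mathcal{C}(\mathcal{O},\,N)$ contains an integral proper ideal prime to $N$. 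All of these holes are fillable by routine arguments, so the strategy is sound; but as written, the proposal fully proves only well-definedness, while the bijectivity---the actual content of the proposition---is asserted exactly where the level-$N$ bookkeeping has to be done.
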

\begin{proof}
See \cite[Definition 5.7 and Theorem 9.4]{J-K-S-Y23}. 
\end{proof}

Let $\ell_\mathcal{O}$ be the conductor of the order $\mathcal{O}$,
and let $P_{1,\,N}(\mathcal{O}_K,\,\ell_\mathcal{O}N)$ be the subgroup of
$I(\mathcal{O}_K,\,\ell_\mathcal{O}N)$ defined by
\begin{equation*}
P_{1,\,N}(\mathcal{O}_K,\,\ell_\mathcal{O}N)
=\left\langle\nu\mathcal{O}_K~\bigg|~
\begin{array}{l}
\textrm{$\nu$ is a nonzero element of $\mathcal{O}_K$ for which
$\nu\mathcal{O}_K$ is prime to $\ell_\mathcal{O}N$}\\
\textrm{and}~\nu\equiv a\Mod{\ell_\mathcal{O}N\mathcal{O}_K}~
\textrm{for some $a\in\mathbb{Z}$ such that $a\equiv1\Mod{N}$}
\end{array}
\right\rangle.
\end{equation*}
Then we have the isomorphism
\begin{equation*}
\begin{array}{ccc}
I(\mathcal{O}_K,\,\ell_\mathcal{O}N)/P_{1,\,N}(\mathcal{O}_K,\,\ell_\mathcal{O}N)
&\stackrel{\sim}{\rightarrow}&\mathcal{C}(\mathcal{O},\,N),\\
\left[\mathfrak{a}\right] &\mapsto&[\mathfrak{a}\cap\mathcal{O}]
\end{array}
\end{equation*}
where $\mathfrak{a}$ stands for a nonzero ideal of $\mathcal{O}_K$ prime to $\ell_\mathcal{O}N$
(\cite[Propositions 2.8 and 2.13]{J-K-S-Y23}). 
On the other hand, 
the existence theorem of
class field theory guarantees that there is a unique abelian extension 
$L$ of $K$ such that 
the generalized ideal class group $I(\mathcal{O}_K,\,\ell_\mathcal{O}N)/P_{1,\,N}(\mathcal{O}_K,\,\ell_\mathcal{O}N)$
is isomorphic to $\mathrm{Gal}(L/K)$ via
the Artin map for the modulus $\ell_\mathcal{O}N\mathcal{O}_K$ (cf. \cite[$\S$V.9]{Janusz}). 
We denote this extension field $L$ of $K$ by $K_{\mathcal{O},\,N}$ and call it the \textit{ray class field of order $\mathcal{O}$ modulo $N\mathcal{O}$},
which plays an important role in the study of solving the quadratic equations
$x^2+ny^2=p$ with additional conditions on $x$ and $y$. We refer to 
\cite{Cho} and \cite{J-K-S-Y23-2} for concrete examples
(or \cite{Cox} without additional conditions on $x$ and $y$). 
\par
Let $\mathcal{F}_N$ be the field of meromorphic modular functions of level $N$ whose
Fourier coefficients belong to the 
$N$th cyclotomic field $\mathbb{Q}(\zeta_N)$. 
Then, $\mathcal{F}_N$ is a Galois extension of $\mathcal{F}_1$ whose Galois group
is isomorphic to $\mathrm{GL}_2(\mathbb{Z}/N\mathbb{Z})/\langle-I_2\rangle$. More precisely,
if $\alpha\in\mathrm{SL}_2(\mathbb{Z}/N\mathbb{Z})/\langle-I_2\rangle$, then for $f\in\mathcal{F}_N$
\begin{equation*}
f^\alpha=f\circ\beta\quad\textrm{where $\beta$ is any element of $\mathrm{SL}_2(\mathbb{Z})$ which reduces to $\alpha$}. 
\end{equation*}
If $\alpha\in\mathrm{GL}_2(\mathbb{Z}/N\mathbb{Z})/\langle-I_2\rangle$ is of the form $\left[\begin{bmatrix}1&0\\0&d\end{bmatrix}\right]$ for some integer $d$ 
relatively prime to $N$, then 
for $\displaystyle f=\sum_{n\gg-\infty}c_nq_\tau^n\in\mathcal{F}_N$ ($c_n\in\mathbb{Q}(\zeta_N)$, $q_\tau=e^{2\pi\mathrm{i}\tau}$)
\begin{equation*}
f^\alpha=\sum_{n\gg-\infty}c_n^{\sigma_d}q_\tau^n
\quad\textrm{where $\sigma_d$ is the element of $\mathrm{Gal}(\mathbb{Q}(\zeta_N)/\mathbb{Q})$
defined by $\zeta_N\mapsto\zeta_N^d$}
\end{equation*}
(cf. \cite[Theorem 3 in Chapter 6]{Lang} and 
\cite[Proposition 6.9 (1)]{Shimura}). By the theory of complex multiplication, we get that
\begin{equation}\label{generation1}
K_{\mathcal{O},\,N}=K\left(
f(\tau_\mathcal{O})~|~f\in\mathcal{F}_N~\textrm{is finite at $\tau_\mathcal{O}$}\right)
\end{equation}
(\cite[Theorem 4]{Cho}). 
If we let $\mathcal{F}_{\Gamma_1(N),\,\mathbb{Q}}$ be the
field of meromorphic modular functions for $\Gamma_1(N)$ with rational Fourier coefficients,
then we also have
\begin{equation}\label{generation2}
K_{\mathcal{O},\,N}=K\left(
f(\tau_\mathcal{O})~|~f\in\mathcal{F}_{\Gamma_1(N),\,\mathbb{Q}}~\textrm{is finite at $\tau_\mathcal{O}$}\right)
\end{equation}
(\cite[Theorem 5]{Cho}).
And, Jung-Koo-Shin-Yoon recently established an explicit
isomorphism between $\mathcal{C}_{\Gamma_1(N)}(D_\mathcal{O},\,N)$ 
and $\mathrm{Gal}(K_{\mathcal{O},\,N}/K)$. 

\begin{proposition}\label{Galois}
The map
\begin{eqnarray*}
\phi_{D_\mathcal{O},\,N}~:~\mathcal{C}_{\Gamma_1(N)}(D_\mathcal{O},\,N) & \rightarrow & \mathrm{Gal}(K_{\mathcal{O},\,N}/K)\\
\mathrm{[}Q]=[ax^2+bxy+cy^2] & \mapsto & \left(
f(\tau_\mathcal{O})\mapsto f^{\left[
\begin{smallmatrix}1 & -\check{a}\left(\frac{b+b_\mathcal{O}}{2}\right)\\0&\check{a}
\end{smallmatrix}\right]}(-\overline{\omega}_Q)~\bigg|~f\in\mathcal{F}_N~\textrm{is finite at $\tau_\mathcal{O}$}\right)
\end{eqnarray*}
is a well-defined isomorphism, where $\check{a}$ is any integer satisfying $a\check{a}\equiv1\Mod{N}$. 
\end{proposition}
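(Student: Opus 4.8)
The strategy is to recognize $\phi_{D_\mathcal{O},N}$ as the composite of three maps already available in the text: the isomorphism $\rho_{D_\mathcal{O},N}$ of Proposition \ref{formclassgroup}, the ideal-theoretic isomorphism between $\mathcal{C}(\mathcal{O},N)$ and $I(\mathcal{O}_K,\ell_\mathcal{O}N)/P_{1,N}(\mathcal{O}_K,\ell_\mathcal{O}N)$ recalled above, and the Artin map of class field theory onto $\mathrm{Gal}(K_{\mathcal{O},N}/K)$. Call this composite $\Psi$; it is a well-defined isomorphism for free. Hence the entire content of the proposition is the \emph{explicit} evaluation of the Artin action on the generators $f(\tau_\mathcal{O})$ provided by \eqref{generation1}: it suffices to fix a form $Q=ax^2+bxy+cy^2\in\mathcal{Q}(D_\mathcal{O},N)$, to let $\sigma=\Psi([Q])$ be the Artin automorphism attached to the proper $\mathcal{O}$-ideal $[\omega_Q,1]$, and to show that $f(\tau_\mathcal{O})^\sigma=f^\alpha(-\overline{\omega}_Q)$ for every $f\in\mathcal{F}_N$ finite at $\tau_\mathcal{O}$, where $\alpha=\left[\begin{smallmatrix}1 & -\check{a}(b+b_\mathcal{O})/2\\0&\check{a}\end{smallmatrix}\right]$. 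Proving this identity for one representative simultaneously yields well-definedness, since $\Psi$ is already known to be well defined.

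The tool is the main theorem of complex multiplication in Shimura's reciprocity form (cf. \cite{Shimura}, \cite{Cho}). The evaluation point is forced already at level one: since $\rho_{D_\mathcal{O},N}([Q])=[[\omega_Q,1]]$ and $\sigma$ sends $j(\mathcal{O})=j(\tau_\mathcal{O})$ to $j([\omega_Q,1]^{-1})$, and since the inverse class is represented by the opposite form $ax^2-bxy+cy^2$ whose CM point is precisely $-\overline{\omega}_Q=(b+\sqrt{D_\mathcal{O}})/(2a)$, we get $j(\tau_\mathcal{O})^\sigma=j(-\overline{\omega}_Q)$. Thus the passage from $\omega_Q$ to its reflection $-\overline{\omega}_Q$ is exactly the geometric shadow of inverting the ideal class; it is here that complex conjugation enters.

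To obtain the matrix $\alpha$ and to upgrade the identity to all of $\mathcal{F}_N$, I would make the geometric transformation explicit. Writing $m'=(b+b_\mathcal{O})/2\in\mathbb{Z}$ (an integer because $b\equiv b_\mathcal{O}\equiv D_\mathcal{O}\Mod{2}$), a direct lattice computation gives $a\,[-\overline{\omega}_Q,1]=[\tau_\mathcal{O}+m',a]$, so the rational matrix $M'=\left[\begin{smallmatrix}1&m'\\0&a\end{smallmatrix}\right]\in\mathrm{GL}_2^+(\mathbb{Q})$ satisfies $M'\tau_\mathcal{O}=-\overline{\omega}_Q$. Shimura's reciprocity law expresses $\sigma$ on $f(\tau_\mathcal{O})$ through the idele representing $[\omega_Q,1]$; its adelic matrix $q_{\tau_\mathcal{O}}(s)$ splits into the geometric part $M'$, which carries $\tau_\mathcal{O}$ to $-\overline{\omega}_Q$, and an integral part in $\mathrm{GL}_2(\widehat{\mathbb{Z}})$ whose reduction modulo $N$ is $(M')^{-1}=\left[\begin{smallmatrix}1&-m'/a\\0&1/a\end{smallmatrix}\right]\equiv\left[\begin{smallmatrix}1&-\check{a}m'\\0&\check{a}\end{smallmatrix}\right]=\alpha\Mod{N}$, using $1/a\equiv\check{a}\Mod{N}$. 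This yields $f(\tau_\mathcal{O})^\sigma=f^\alpha(-\overline{\omega}_Q)$. As a sanity check, for the principal form $Q_\mathcal{O}$ one has $a=1$, $\check{a}=1$, $b=b_\mathcal{O}$, so $\alpha=\left[\begin{smallmatrix}1&-b_\mathcal{O}\\0&1\end{smallmatrix}\right]$ and $-\overline{\omega}_{Q_\mathcal{O}}=\tau_\mathcal{O}+b_\mathcal{O}$, whence $f^\alpha(-\overline{\omega}_{Q_\mathcal{O}})=f(\tau_\mathcal{O})$, the identity automorphism, as it must be.

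The main obstacle is precisely the idelic bookkeeping in the middle step: choosing an idele $s$ that represents the class of $[\omega_Q,1]$ in $I(\mathcal{O}_K,\ell_\mathcal{O}N)/P_{1,N}(\mathcal{O}_K,\ell_\mathcal{O}N)$, computing its local matrices $q_{\tau_\mathcal{O}}(s)_p$ (nontrivial only at the primes dividing $a$, which are prime to $N$), and cleanly separating the rational factor $M'$ responsible for the point $-\overline{\omega}_Q$ from the $\mathrm{GL}_2(\widehat{\mathbb{Z}})$-factor responsible for $\alpha$, all while tracking the complex conjugation and the inverse and $\pm I_2$ conventions built into the reciprocity law. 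Once the formula holds for the chosen representative, independence of the auxiliary choice of $\check{a}$ modulo $N$ and of the representative form within its $\sim_{\Gamma_1(N)}$-class follows from $\phi_{D_\mathcal{O},N}=\Psi$ together with $f\in\mathcal{F}_N$, completing the proof.
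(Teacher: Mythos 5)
The paper itself offers no argument for Proposition \ref{Galois}: its ``proof'' is the single citation \cite[Theorem 12.3]{J-K-S-Y23}, so there is no internal derivation to measure your proposal against, and the natural comparison is with that reference, which constructs $\phi_{D_\mathcal{O},\,N}$ by essentially the Shimura-reciprocity computation you outline. Your reduction is sound: once $\Psi$ (the Artin map composed with the ideal-theoretic isomorphism and $\rho_{D_\mathcal{O},\,N}$ of Proposition \ref{formclassgroup}) is known to be a well-defined isomorphism, it suffices to prove, for every individual form $Q$, the evaluation identity $f(\tau_\mathcal{O})^{\Psi([Q])}=f^{\alpha}(-\overline{\omega}_Q)$ on the generators supplied by (\ref{generation1}); well-definedness of the displayed map is then inherited from $\Psi$. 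The checkable ingredients are also correct: $m'=(b+b_\mathcal{O})/2\in\mathbb{Z}$, the lattice identity $a[-\overline{\omega}_Q,\,1]=[\tau_\mathcal{O}+m',\,a]$, the congruence $(M')^{-1}\equiv\alpha\Mod{N}$, and the principal-form sanity check.

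The crux, however, is wrong as you literally wrote it and must be repaired. If $s\in\widehat{K}^\times$ locally generates $\mathfrak{a}=[\omega_Q,\,1]$ itself, then the rows of $q_{\tau_\mathcal{O}}(s)$ span $\mathfrak{a}\otimes_\mathbb{Z}\widehat{\mathbb{Z}}$ in the basis $(\tau_\mathcal{O},1)$, so the rational part $\beta$ of any decomposition $q_{\tau_\mathcal{O}}(s)=u\beta$ with $u\in\mathrm{GL}_2(\widehat{\mathbb{Z}})$, $\beta\in\mathrm{GL}_2^+(\mathbb{Q})$ carries $\tau_\mathcal{O}$ into the $\mathrm{SL}_2(\mathbb{Z})$-orbit of $\omega_Q$, not of $-\overline{\omega}_Q$; these two orbits are disjoint whenever the class of $Q$ in $\mathcal{C}(D_\mathcal{O})$ has order greater than $2$, since the opposite form represents the inverse class. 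The factorization you assert, $q_{\tau_\mathcal{O}}(s)=uM'$ with $u\equiv(M')^{-1}\equiv\alpha\Mod{N}$, is the one attached to an idele $s$ that locally generates the \emph{inverse} ideal $\mathfrak{a}^{-1}=a[-\overline{\omega}_Q,\,1]=[\tau_\mathcal{O}+m',\,a]$ and has components $1$ at the primes dividing $N$. With that choice, $[s^{-1},K]$ restricts on $K_{\mathcal{O},\,N}$ to the Artin symbol of the class of $\mathfrak{a}$ (arithmetic normalization), and Shimura's reciprocity law $f(\tau_\mathcal{O})^{[s^{-1},K]}=f^{q_{\tau_\mathcal{O}}(s)}(\tau_\mathcal{O})=f^{\alpha}(M'(\tau_\mathcal{O}))$ gives precisely the asserted formula. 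You do flag ``the inverse and $\pm I_2$ conventions'' as an obstacle, but resolving them is the entire mathematical content of the proposition, and your text resolves them incorrectly; note that the slip is not fatal to the bare statement---a mismatched convention only replaces $\Psi$ by $\Psi$ composed with inversion, which is still an isomorphism---but it does change which isomorphism the displayed formula computes. With the idele chosen as above, your outline closes into a complete proof along the same lines as the cited reference.
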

\begin{proof}
See \cite[Theorem 12.3]{J-K-S-Y23}. 
\end{proof}

\begin{remark}\label{cyclotomic}
\begin{enumerate}
\item[(i)]
By composing the isomorphism $\phi_{D_\mathcal{O},\,N}$ 
given in Proposition \ref{Galois}
and the restriction map
$\mathrm{Gal}(K_{\mathcal{O},\,N}/K)\rightarrow
\mathrm{Gal}(K(\zeta_N)/K)$, we obtain the surjective homomorphism
\begin{equation*}
\begin{array}{ccc}
\mathcal{C}_{\Gamma_1(N)}(D_\mathcal{O},\,N)&\twoheadrightarrow&\mathrm{Gal}(K(\zeta_N)/K),\\ 
\left[ax^2+bxy+cy^2\right]&\mapsto&\left(\zeta_N\mapsto\zeta_N^{\check{a}}\right).
\end{array}
\end{equation*}
Thus, $\{a+N\mathbb{Z}~|~ax^2+bxy+cy^2\in\mathcal{Q}(D_\mathcal{O},\,N)\}$ is
a subgroup of $(\mathbb{Z}/N\mathbb{Z})^\times$ ($\simeq\mathrm{Gal}(\mathbb{Q}(\zeta_N)/\mathbb{Q})$) 
of index 
\begin{equation*}
\frac{[\mathbb{Q}(\zeta_N):\mathbb{Q}]}{[K(\zeta_N):K]}=\frac{[K:\mathbb{Q}]}{[K(\zeta_N):\mathbb{Q}(\zeta_N)]}=
\left\{\begin{array}{ll}
1 & \textrm{if}~K\not\subseteq\mathbb{Q}(\zeta_N),\\
2 & \textrm{if}~K\subseteq\mathbb{Q}(\zeta_N).
\end{array}\right.
\end{equation*}
Moreover, if we let $k_{D_\mathcal{O},\,N}$ be the fixed field of $\mathbb{Q}(\zeta_N)$
by the subgroup $\{a+N\mathbb{Z}~|~ax^2+bxy+cy^2\in\mathcal{Q}(D_\mathcal{O},\,N)\}$
of $(\mathbb{Z}/N\mathbb{Z})^\times$, 
then we attain
\begin{equation*}
k_{D_\mathcal{O},\,N}=\mathbb{Q}(\zeta_N)\cap K=\mathbb{Q}~\textrm{or}~K. 
\end{equation*}
\item[(ii)] 
Observe that $K(\zeta_N)$ is Galois over $\mathbb{Q}$ and
the map $\mathfrak{c}\in\mathrm{Gal}(K(\zeta_N)/\mathbb{Q})$ obtained by restricting the complex conjugation on $\mathbb{C}$
does not belong to $\mathrm{Gal}(K(\zeta_N)/K)$.  
If $k_{D_\mathcal{O},\,N}=K$, that is, if $K\subseteq\mathbb{Q}(\zeta_N)$,  
then we see that
$\mathfrak{c}$ ($\not\in\mathrm{Gal}(K(\zeta_N)/K)$)  is the only element of $\mathrm{Gal}(K(\zeta_N)/\mathbb{Q})=
\mathrm{Gal}(\mathbb{Q}(\zeta_N)/\mathbb{Q})$ which
maps $\zeta_N$ to $\zeta_N^{-1}$. 
\par
We claim that if $-1$ is a quadratic residue modulo $N$, then $k_{D_\mathcal{O},\,N}=\mathbb{Q}$. Indeed, let $t$ be a nonzero integer such that $t^2\equiv-1\Mod{N}$,
and let $p$ be a prime such that $p\equiv t\Mod{N}$. The action of the
Artin map $\left(\frac{K(\zeta_N)/K}{p\mathcal{O}_K}\right)$ ($\in\mathrm{Gal}(K(\zeta_N)/K)$) on $\zeta_N$ is given by 
\begin{eqnarray*}
\zeta_N^{\left(\frac{K(\zeta_N)/K}{p\mathcal{O}_K}\right)}=
\zeta_N^{\left(\frac{\mathbb{Q}(\zeta_N)/\mathbb{Q}}{N_{K/\mathbb{Q}}(p\mathcal{O}_K)}\right)}=
\zeta_N^{\left(\frac{\mathbb{Q}(\zeta_N)/\mathbb{Q}}{p^2\mathbb{Z}}\right)}=
\zeta_N^{t^2}=\zeta_N^{-1} 
\end{eqnarray*}
(cf. \cite[Proposition 3.1 in Chapter III]{Janusz}). This observation together with the preceding paragraph implies that
$k_{D_\mathcal{O},\,N}=\mathbb{Q}(\zeta_N)\cap K=\mathbb{Q}$. 
\end{enumerate}
\end{remark}

\begin{definition}{(\cite[Definition 4.3]{J-K-S-Y23-2})}\label{induce}
Let $N$ be a positive integer. 
We say that a subgroup $\Gamma$ of $\mathrm{SL}_2(\mathbb{Z})$ 
\textit{induces a form class group of discriminant $D_\mathcal{O}$ (or of order $\mathcal{O}$) and level $N$} if
\begin{enumerate}
\item[(i)] $\Gamma$ contains $\Gamma_1(N)$,
\item[(ii)] there is a subgroup $P$ of $P(\mathcal{O},\,N)$ containing $P_{1,\,N}(\mathcal{O},\,N)$ for which the mapping
\begin{equation*}
\begin{array}{ccc}
\mathcal{C}_\Gamma(D_\mathcal{O},\,N)&\rightarrow& I(\mathcal{O},\,N)/P,\\
\left[Q\right]&\mapsto&[[\omega_Q,\,1]]
\end{array}
\end{equation*}
is a well-defined bijection. 
\end{enumerate}
In this case, we regard $\mathcal{C}_\Gamma(D_\mathcal{O},\,N)$ as a group isomorphic to $I(\mathcal{O},\,N)/P$,
and call it a \textit{form class group of discriminant $D_\mathcal{O}$ and level $N$}. 
\end{definition}

\begin{remark}\label{full}
\begin{enumerate}
\item[(i)] By Proposition \ref{formclassgroup}, $\Gamma_1(N)$ induces a form class group of
discriminant $D_\mathcal{O}$ and level $N$. 
\item[(ii)] The full modular group $\mathrm{SL}_2(\mathbb{Z})$ also induces
a form class group of discriminant $D_\mathcal{O}$ and level $N$ because the map
\begin{equation*}
\begin{array}{ccc}
\mathcal{C}_{\mathrm{SL}_2(\mathbb{Z})}(D_\mathcal{O},\,N)&
\rightarrow& I(\mathcal{O},\,N)/P(\mathcal{O},\,N),\\
\left[Q\right]&\mapsto&[[\omega_Q,\,1]]
\end{array}
\end{equation*}
is a well-defined bijection (\cite[Corollary 7.17]{Cox} and \cite[Lemmas 2.2 and 2.11]{J-K-S-Y23}). 
\end{enumerate}
\end{remark}

\begin{remark}
If a subgroup $\Gamma$ of $\mathrm{SL}_2(\mathbb{Z})$ induces a form class group of discriminant $D_\mathcal{O}$ and level $N$, 
then we see from the commutative diagram in Figure \ref{homo}
that the natural surjection $\mathcal{C}_{\Gamma_1(N)}(D_\mathcal{O},\,N)
\twoheadrightarrow\mathcal{C}_\Gamma(D_\mathcal{O},\,N)$
becomes a homomorphism.  
\begin{figure}[H]
\begin{equation*}
\xymatrixcolsep{10pc}
\xymatrix{
\mathcal{C}_{\Gamma_1(N)}(D_\mathcal{O},\,N) \ar@{->}[r]^{\sim}
\ar@{->>}[dd]_{\textrm{the natural surjection}}
 & \mathcal{C}(\mathcal{O},\,N) \ar@{->>}[dd]^{\textrm{the natural homomorphism}} \\\\
\mathcal{C}_\Gamma(D_\mathcal{O},\,N) \ar@{->}[r]^\sim & I(\mathcal{O},\,N)/P
}
\end{equation*}
\caption{A commutative diagram of class groups}
\label{homo}
\end{figure}

Therefore, the operation of $\mathcal{C}_\Gamma(D_\mathcal{O},\,N)$ is directly induced from that of 
$\mathcal{C}_{\Gamma_1(N)}(D_\mathcal{O},\,N)$. 
\end{remark}

\section {Canonical models of modular curves}\label{canonicalmodel}

In this section, we shall briefly recall the theory of canonical models
of modular curves established by Shimura. 
\par
Let
\begin{equation*}
\mathcal{F}=\bigcup_{N=1}^\infty\mathcal{F}_N,\quad
\widehat{\mathbb{Z}}=\prod_{p\,:\,\textrm{primes}}\mathbb{Z}_p
\quad\textrm{and}\quad
\widehat{\mathbb{Q}}=\mathbb{Q}\otimes_\mathbb{Z}\widehat{\mathbb{Z}}. 
\end{equation*}
Then we have an exact sequence
\begin{equation*}
1\rightarrow\mathbb{Q}^\times\rightarrow\mathrm{GL}_2(\widehat{\mathbb{Q}})
\rightarrow\mathrm{Gal}(\mathcal{F}/\mathbb{Q})\rightarrow1
\end{equation*}
which can be described as follows\,: let $N$ be a positive integer and $\gamma\in\mathrm{GL}_2(\widehat{\mathbb{Q}})$.
One can decompose $\gamma$ as
\begin{equation*}
\gamma=\alpha\beta\quad\textrm{for some}~\alpha=(\alpha_p)_p\in\mathrm{GL}_2(\widehat{\mathbb{Z}})
=\prod_{p\,:\,\textrm{primes}}\mathrm{GL}_2(\mathbb{Z}_p)~
\textrm{and}~\beta\in\mathrm{GL}_2^+(\mathbb{Q}). 
\end{equation*}
Furthermore, by using the Chinese remainder theorem
one can take a matrix $\alpha'$ in $M_2(\mathbb{Z})$
such that $\alpha'\equiv\alpha_p\Mod{NM_2(\mathbb{Z}_p)}$ for all primes $p$.
If we let $\alpha''$ be the reduction of $\alpha'$ to $\mathrm{GL}_2(\mathbb{Z}/N\mathbb{Z})/\langle-I_2\rangle$
($\simeq\mathrm{Gal}(\mathcal{F}_N/\mathcal{F}_1)$), then 
the action of $\gamma$ on $\mathcal{F}_N$ is given by
\begin{equation*}
f^\gamma=f^{\alpha''}\circ\beta\quad(f\in\mathcal{F}_N)
\end{equation*}
(\cite[Theorem 6.23]{Shimura}).
\par
Let $S$ be an open subgroup of $\mathrm{GL}_2(\widehat{\mathbb{Q}})$
containing $\mathbb{Q}^\times$ such that $S/\mathbb{Q}^\times$ is compact. We define
\begin{align*}
\Gamma_S&=S\cap\mathrm{GL}_2^+(\mathbb{Q}),\\
\mathcal{F}_S&=\left\{f\in\mathcal{F}~|~f^\gamma=f~\textrm{for all}~\gamma\in S\right\},\\
k_S&=\left\{
\nu\in\mathbb{Q}^\mathrm{ab}~|~\nu^{[s^{-1},\,\mathbb{Q}]}=\nu~\textrm{for all}~
s\in\det(S)\right\}.
\end{align*}
Here, $[\,\cdot,\,\mathbb{Q}]:\widehat{\mathbb{Q}}^\times\rightarrow
\mathrm{Gal}(\mathbb{Q}^\mathrm{ab}/\mathbb{Q})$ means the Artin map for $\mathbb{Q}$,
where $\mathbb{Q}^\mathrm{ab}$ is the maximal abelian extension of $\mathbb{Q}$. 

\begin{proposition}\label{FSkS}
With the notations as above, we get that
\begin{enumerate}
\item[\textup{(i)}] $\Gamma_S/\mathbb{Q}^\times$ is a Fuchsian group of the first kind 
commensurable with $\mathrm{SL}_2(\mathbb{Z})/\langle-I_2\rangle$.
\item[\textup{(ii)}] $\mathbb{C}\mathcal{F}_S$ is the field of 
meromorphic modular functions for $\Gamma_S/\mathbb{Q}^\times$.
\item[\textup{(iii)}] $k_S$ is algebraically closed in $\mathcal{F}_S$. 
\end{enumerate}
\end{proposition}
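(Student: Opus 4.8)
The plan is to prove the three assertions separately, using throughout that $S$ is open (which forces $\Gamma_S$ to be large) and that $S/\mathbb{Q}^\times$ is compact (which forces $\Gamma_S$ to be small). For (i) I would first use openness: since $S$ contains $\mathbb{Q}^\times$ and is open, it contains $\mathbb{Q}^\times\mathcal{U}_N$ for some $N$, where $\mathcal{U}_N=\{x\in\mathrm{GL}_2(\widehat{\mathbb{Z}})\mid x\equiv I_2\Mod{N}\}$. The diagonal image of $\Gamma(N)$ lies in $\mathcal{U}_N$, so $\Gamma(N)\subseteq\Gamma_S$ and hence $\Gamma_S\cap\mathrm{SL}_2(\mathbb{Z})$ has finite index in $\mathrm{SL}_2(\mathbb{Z})$. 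For the reverse I would use compactness: because $\mathbb{Q}^\times\mathrm{GL}_2(\widehat{\mathbb{Z}})$ is open, the quotient $\mathrm{GL}_2(\widehat{\mathbb{Q}})/\mathbb{Q}^\times\mathrm{GL}_2(\widehat{\mathbb{Z}})$ is discrete, so the image of the compact set $S/\mathbb{Q}^\times$ is finite and $S\cap\mathbb{Q}^\times\mathrm{GL}_2(\widehat{\mathbb{Z}})$ has finite index in $S$. A direct computation gives $\mathbb{Q}^\times\mathrm{GL}_2(\widehat{\mathbb{Z}})\cap\mathrm{GL}_2^+(\mathbb{Q})=\mathbb{Q}^\times\mathrm{SL}_2(\mathbb{Z})$, so intersecting with $\mathrm{GL}_2^+(\mathbb{Q})$ and reducing modulo $\mathbb{Q}^\times$ shows $(\mathrm{SL}_2(\mathbb{Z})/\langle -I_2\rangle)\cap(\Gamma_S/\mathbb{Q}^\times)$ has finite index in $\Gamma_S/\mathbb{Q}^\times$. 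Thus $\Gamma_S/\mathbb{Q}^\times$ is commensurable with $\mathrm{SL}_2(\mathbb{Z})/\langle -I_2\rangle$; as it then contains a finite-index discrete subgroup of finite covolume, it is itself discrete and Fuchsian of the first kind.

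For (iii) I would determine the exact field of constants of $\mathcal{F}_S$. The algebraic elements of $\mathcal{F}$ are precisely the constants $\mathbb{Q}^{\mathrm{ab}}=\bigcup_N\mathbb{Q}(\zeta_N)$, so $\mathcal{F}_S\cap\overline{\mathbb{Q}}=\mathcal{F}_S\cap\mathbb{Q}^{\mathrm{ab}}$. By the displayed formula for $\bigl[\begin{smallmatrix}1&0\\0&d\end{smallmatrix}\bigr]$, an adelic element acts on $\mathbb{Q}(\zeta_N)$ through $\sigma_{\det}$, which under the normalization of the Artin map in the definition of $k_S$ is the restriction of $[\det^{-1},\mathbb{Q}]$. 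Hence the constants fixed by $S$ are exactly $\{\nu\in\mathbb{Q}^{\mathrm{ab}}\mid\nu^{[s^{-1},\mathbb{Q}]}=\nu\ \text{for all}\ s\in\det(S)\}=k_S$, i.e.\ $\mathcal{F}_S\cap\overline{\mathbb{Q}}=k_S$. Since the field of constants is algebraically closed in any field of algebraic functions of one variable, $k_S$ is algebraically closed in $\mathcal{F}_S$.

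For (ii) one inclusion is immediate: for $\gamma\in\Gamma_S$ one may take $\gamma=\alpha\beta$ with $\alpha=I_2$, $\beta=\gamma$, so $f^\gamma=f\circ\gamma$; every $f\in\mathcal{F}_S$ is therefore invariant under the geometric $\Gamma_S$-action and, lying in some $\mathcal{F}_N$, is a meromorphic modular function for $\Gamma_S/\mathbb{Q}^\times$, giving $\mathbb{C}\mathcal{F}_S\subseteq A_{\Gamma_S}$, the field of all such functions. For the reverse inclusion I would enlarge $N$ (replacing $\mathcal{U}_N$ by the intersection of its finitely many $S$-conjugates) so that $\mathbb{Q}^\times\mathcal{U}_N\trianglelefteq S$; then $\mathcal{F}_S\subseteq\mathcal{F}_N$ and $\mathcal{F}_N/\mathcal{F}_S$ is finite Galois with group $\overline{S}=S/\mathbb{Q}^\times\mathcal{U}_N$. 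The essential tool is strong approximation for $\mathrm{SL}_2$: it lets me factor each $\gamma\in S$, modulo $\mathbb{Q}^\times\mathcal{U}_N$, as a product of an element of $\Gamma_S$ (whose geometric action fixes $\mathcal{F}_N^{\Gamma_S}$) and an element acting on $\mathcal{F}_N$ only through $\sigma_{\det}$ on the constants $\mathbb{Q}(\zeta_N)$. Granting the constant-field identification from (iii), this yields $\mathcal{F}_N^{\Gamma_S}=\mathbb{Q}(\zeta_N)\otimes_{k_S}\mathcal{F}_S$; since $\mathbb{C}\mathcal{F}_N$ is classically the field of all meromorphic modular functions of level $N$ and taking $\Gamma_S$-invariants commutes with extension of the constant field to $\mathbb{C}$, I obtain $A_{\Gamma_S}=(\mathbb{C}\mathcal{F}_N)^{\Gamma_S}=\mathbb{C}\otimes_{\mathbb{Q}(\zeta_N)}\mathcal{F}_N^{\Gamma_S}=\mathbb{C}\otimes_{k_S}\mathcal{F}_S=\mathbb{C}\mathcal{F}_S$.

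The routine parts are the openness/compactness bookkeeping in (i) and the identification of the action on constants in (iii). The genuine difficulty lies in the reverse inclusion of (ii): reconciling the geometric action of $\Gamma_S$ on $\mathbb{H}$ with the adelic action of $S$ on Fourier coefficients, and showing that after adjoining $\mathbb{C}$ the two notions of invariance agree. This is exactly where strong approximation for $\mathrm{SL}_2$ is indispensable, and correctly splitting $\overline{S}$ into its geometric part (absorbed into $\Gamma_S$) and its cyclotomic determinant part is the crux of the argument.
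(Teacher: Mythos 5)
First, a point of reference: the paper offers no argument for this proposition at all --- its ``proof'' is the single line ``See \cite[Proposition 6.27]{Shimura}.'' Your proposal is therefore in effect a reconstruction of Shimura's own proof, and in outline it follows the same route. Parts (i) and (iii) of your sketch are correct as written: openness gives $\Gamma(N)\subseteq\Gamma_S$, compactness together with $\mathbb{Q}^\times\mathrm{GL}_2(\widehat{\mathbb{Z}})\cap\mathrm{GL}_2^+(\mathbb{Q})=\mathbb{Q}^\times\mathrm{SL}_2(\mathbb{Z})$ gives the reverse finite-index relation, and identifying the action of $S$ on $\mathbb{Q}^{\mathrm{ab}}=\mathcal{F}\cap\overline{\mathbb{Q}}$ with $s\mapsto[\det(s)^{-1},\mathbb{Q}]$ yields $\mathcal{F}_S\cap\overline{\mathbb{Q}}=k_S$ and hence (iii).

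The step you yourself call the crux of (ii), however, is false as stated. You claim that every $\gamma\in S$ factors, modulo $\mathbb{Q}^\times\mathcal{U}_N$, as an element of $\Gamma_S$ times an element acting on $\mathcal{F}_N$ purely through $\sigma_{\det}$ on $\mathbb{Q}(\zeta_N)$. Since $\Gamma_S\subseteq S$, the second factor would automatically lie in $S$, so your claim amounts to $S=\Gamma_S\cdot S_0$, where $S_0$ is the set of elements of $S$ whose class in $\mathrm{GL}_2(\mathbb{Z}/N\mathbb{Z})/\langle-I_2\rangle$ has the form $\left[\begin{smallmatrix}1&0\\0&d\end{smallmatrix}\right]$; in other words, the determinant character of $\overline{S}$ would have to be realized on diagonal classes already present in $S$. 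That can fail. Take $N=5$ and let $S$ be the full preimage in $\mathbb{Q}^\times\mathrm{GL}_2(\widehat{\mathbb{Z}})$ of the cyclic group $G$ of order $4$ generated by $\left[\begin{smallmatrix}1&1\\0&2\end{smallmatrix}\right]$ in $\mathrm{GL}_2(\mathbb{Z}/5\mathbb{Z})/\langle-I_2\rangle$. This $S$ satisfies all the hypotheses, and $\det$ maps $G$ onto $(\mathbb{Z}/5\mathbb{Z})^\times$, but $G$ contains no nontrivial class of determinant $1$ and no nontrivial class of the form $\left[\begin{smallmatrix}1&0\\0&d\end{smallmatrix}\right]$. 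Hence $\Gamma_S=\mathbb{Q}^\times\langle\Gamma(5),-I_2\rangle$, $S_0=\mathbb{Q}^\times\mathcal{U}_5$, and $\Gamma_S S_0=\mathbb{Q}^\times\mathcal{U}_5\subsetneq S$. (The proposition itself still holds here: $k_S=\mathbb{Q}$, $\mathcal{F}_5=\mathbb{Q}(\zeta_5)\mathcal{F}_S$, and $\mathbb{C}\mathcal{F}_S=\mathbb{C}\mathcal{F}_5$ is indeed the function field of $\Gamma_S/\mathbb{Q}^\times=\Gamma(5)/\langle-I_2\rangle$.)

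The good news is that your target identity $\mathcal{F}_N^{\Gamma_S}=\mathbb{Q}(\zeta_N)\mathcal{F}_S$ never needed a factorization of \emph{every} element. In the Galois correspondence for $\mathcal{F}_N/\mathcal{F}_S$ (group $\overline{S}$), the compositum $\mathbb{Q}(\zeta_N)\mathcal{F}_S$ corresponds to the kernel $H$ of the action of $\overline{S}$ on the constants, and $\mathcal{F}_N^{\Gamma_S}$ corresponds to $\overline{\Gamma_S}$; since $\overline{\Gamma_S}\subseteq H$ is clear, all you need is the one-sided inclusion $H\subseteq\overline{\Gamma_S}$, i.e.\ that every $\gamma\in S$ acting trivially on $\mathbb{Q}(\zeta_N)$ lies in $\Gamma_S\mathbb{Q}^\times\mathcal{U}_N$ --- and \emph{this} is exactly what strong approximation gives. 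Triviality of $[\det(\gamma)^{-1},\mathbb{Q}]$ on $\mathbb{Q}(\zeta_N)$ forces $\det\gamma=qv$ with $q\in\mathbb{Q}_{>0}$ and $v\in\widehat{\mathbb{Z}}^\times$, $v\equiv1\Mod{N}$, because the kernel of the Artin map on finite ideles is $\mathbb{Q}_{>0}$; this positivity is what puts the rational factor below into $\mathrm{GL}_2^+(\mathbb{Q})$. Then $\epsilon=\left[\begin{smallmatrix}1&0\\0&v\end{smallmatrix}\right]\in\mathcal{U}_N\subseteq S$; choose $\beta\in\mathrm{GL}_2^+(\mathbb{Q})$ with $\det\beta=q$ and, by density of $\mathrm{SL}_2(\mathbb{Q})$ in $\mathrm{SL}_2(\widehat{\mathbb{Q}})$, write $\beta^{-1}\gamma\epsilon^{-1}=\delta_0u_0$ with $\delta_0\in\mathrm{SL}_2(\mathbb{Q})$ and $u_0\in\mathcal{U}_N\cap\mathrm{SL}_2(\widehat{\mathbb{Q}})$. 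Then $\gamma=(\beta\delta_0)(u_0\epsilon)$ with $u_0\epsilon\in\mathcal{U}_N$ and $\beta\delta_0=\gamma(u_0\epsilon)^{-1}\in S\cap\mathrm{GL}_2^+(\mathbb{Q})=\Gamma_S$, as required. With the crux reformulated this way, the rest of your part (ii) (Artin's lemma for $\overline{S}$ and $\overline{\Gamma_S}$, then extension of constants to $\mathbb{C}$, using your normality trick so that $\Gamma_S$ really does act on $\mathcal{F}_N$ and on $\mathbb{C}\mathcal{F}_N$) goes through, and the whole proposal becomes a correct proof along Shimura's lines.
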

\begin{proof}
See \cite[Proposition 6.27]{Shimura}. 
\end{proof}

Let $\mathbb{H}^*=\mathbb{H}\cup\mathbb{Q}\cup\{\mathrm{i}\infty\}$. 
By virtue of Proposition \ref{FSkS}, we can consider a \textit{canonical model}
$(V_S,\,\varphi_S)$ of the modular curve $\Gamma_S\backslash\mathbb{H}^*$,
where $V_S$ is a nonsingular projective curve defined over $\mathbb{C}$ and 
$\varphi_S:\mathbb{H}^*\rightarrow
V_S$ is a $\Gamma_S$-invariant holomorphic map which gives a biregular isomorphism of $\Gamma_S
\backslash\mathbb{H}^*$ onto $V_S$, 
characterized by the following properties\,:
\begin{enumerate}
\item[(i)] $V_S$ is defined over $k_S$
\item[(ii)] $\mathcal{F}_S=\{h\circ\varphi_S~|~\textrm{$h$ is a $k_S$-rational function 
on $V_S$ in the sense of algebraic geometry}\}$. 
\end{enumerate} 

\begin{lemma}\label{twopoints}
Let $\omega$ and $\omega'$ be elements of $\mathbb{H}$ such that
\begin{equation*}
\{f\in\mathcal{F}_S~|~\textrm{$f$ is finite at $\omega$}\}=
\{f\in\mathcal{F}_S~|~\textrm{$f$ is finite at $\omega'$}\}. 
\end{equation*}
If $f(\omega)=f(\omega')$ for all $f\in\mathcal{F}_S$ finite at $\omega$, then
$\omega=\gamma(\omega')$ for some $\gamma\in\Gamma_S/\mathbb{Q}^\times$. 
\end{lemma}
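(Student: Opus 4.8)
The plan is to transport the two hypotheses on $\omega,\omega'$ to the canonical model $V_S$ and to show that their images $P=\varphi_S(\omega)$ and $P'=\varphi_S(\omega')$ coincide as complex points of $V_S$; the conclusion then drops out because $\varphi_S$ realizes a biregular isomorphism $\Gamma_S\backslash\mathbb{H}^*\xrightarrow{\sim}V_S$.

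First I would use property (ii) of the canonical model to identify $\mathcal{F}_S$ with the field $k_S(V_S)$ of $k_S$-rational functions on $V_S$ through $h\mapsto h\circ\varphi_S$. Under this identification, for $f=h\circ\varphi_S$ the function $f$ is finite at $\omega$ exactly when $h$ has no pole at $P$, i.e.\ when $h$ lies in the local ring $\mathcal{O}_{V_S,\mathfrak{p}}$ at the closed point $\mathfrak{p}$ of $V_S$ lying beneath the complex point $P$. Thus $\{f\in\mathcal{F}_S\mid f\ \text{finite at}\ \omega\}$ becomes $\mathcal{O}_{V_S,\mathfrak{p}}$, and likewise $\{f\in\mathcal{F}_S\mid f\ \text{finite at}\ \omega'\}$ becomes $\mathcal{O}_{V_S,\mathfrak{p}'}$, where $\mathfrak{p}'$ is the closed point beneath $P'$.

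The first hypothesis then reads $\mathcal{O}_{V_S,\mathfrak{p}}=\mathcal{O}_{V_S,\mathfrak{p}'}$ as subrings of $k_S(V_S)$. Since $V_S$ is a nonsingular projective curve, its closed points are in bijection with the discrete valuations of $k_S(V_S)$ trivial on $k_S$, and distinct closed points yield distinct valuation rings; hence $\mathfrak{p}=\mathfrak{p}'$, so $P$ and $P'$ sit over one and the same closed point. Writing $\kappa(\mathfrak{p})$ for its residue field (a finite extension of $k_S$ inside $\mathbb{C}$), the complex points $P$ and $P'$ correspond to two $k_S$-embeddings $\iota_P,\iota_{P'}:\kappa(\mathfrak{p})\hookrightarrow\mathbb{C}$, and for $h\in\mathcal{O}_{V_S,\mathfrak{p}}$ one has $h(P)=\iota_P(\overline{h})$ and $h(P')=\iota_{P'}(\overline{h})$, where $\overline{h}$ denotes the residue of $h$ in $\kappa(\mathfrak{p})$. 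Now the second hypothesis, that $f(\omega)=f(\omega')$ for every $f\in\mathcal{F}_S$ finite at $\omega$, becomes $\iota_P(\overline{h})=\iota_{P'}(\overline{h})$ for all $h\in\mathcal{O}_{V_S,\mathfrak{p}}$. As the reduction map $\mathcal{O}_{V_S,\mathfrak{p}}\to\kappa(\mathfrak{p})$ is surjective, $\overline{h}$ runs over all of $\kappa(\mathfrak{p})$, whence $\iota_P=\iota_{P'}$. A complex point of the $k_S$-variety $V_S$ is determined by its underlying closed point together with the induced embedding of the residue field into $\mathbb{C}$, so $P=P'$. Finally, from $\varphi_S(\omega)=P=P'=\varphi_S(\omega')$ and the fact that $\varphi_S$ induces a bijection $\Gamma_S\backslash\mathbb{H}^*\to V_S$, the points $\omega,\omega'\in\mathbb{H}$ lie in the same $\Gamma_S$-orbit, i.e.\ $\omega=\gamma(\omega')$ for some $\gamma\in\Gamma_S/\mathbb{Q}^\times$, scalar matrices acting trivially on $\mathbb{H}$.

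The step I expect to be the main obstacle is the last passage, from \emph{same closed point and matching values on the whole local ring} to \emph{same complex point}. This is precisely where the arithmetic of the model over $k_S$ (rather than over $\mathbb{C}$) must be handled with care: one has to argue that a $k_S$-rational function detects a complex point only through its closed point and the attached residue-field embedding, so that agreement of all such functions forces the two embeddings---and therefore the two complex points---to coincide. Once this is in place, the remaining verifications (that ``finite at $\omega$'' corresponds to membership in $\mathcal{O}_{V_S,\mathfrak{p}}$, and that $\Gamma_S$-equivalence is exactly what $\varphi_S$ collapses) are routine consequences of the defining properties of the canonical model.
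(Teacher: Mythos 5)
A caveat on comparison first: the paper does not actually prove this lemma --- its ``proof'' is a citation to \cite[Lemma 7.1]{J-K-S-Y223}. Your route through the canonical model $(V_S,\varphi_S)$ is the natural (and surely the intended) one, and its core observation is sound: a complex point of the $k_S$-curve $V_S$ is determined by its underlying scheme point together with the induced embedding of the residue field into $\mathbb{C}$, so $k_S$-rational functions separate points of $V_S(\mathbb{C})$, and the biregular isomorphism $\Gamma_S\backslash\mathbb{H}^*\stackrel{\sim}{\rightarrow}V_S$ then converts $P=P'$ into $\Gamma_S$-equivalence of $\omega$ and $\omega'$.

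There is, however, a genuine gap: you assume throughout that $P=\varphi_S(\omega)$ lies over a \emph{closed} point $\mathfrak{p}$ of $V_S$ with residue field finite over $k_S$. This fails for all but countably many $\omega\in\mathbb{H}$: unless $\varphi_S(\omega)$ has coordinates algebraic over $k_S$ (as happens for CM points, which is all the paper needs, but not all the lemma asserts), the morphism $\mathrm{Spec}\,\mathbb{C}\rightarrow V_S$ given by $P$ hits the \emph{generic} point, there is no closed point beneath $P$, and the set $\{f\in\mathcal{F}_S~|~f~\textrm{finite at}~\omega\}$ is all of $\mathcal{F}_S$ rather than a proper local ring. (Concretely, when $\mathcal{F}_S=\mathbb{Q}(j)$, $V_S=\mathbb{P}^1_{\mathbb{Q}}$ and $\varphi_S=j$: for generic $\omega$ the value $j(\omega)$ is transcendental and no rational function over $\mathbb{Q}$ has a pole there.) Since the lemma concerns arbitrary $\omega,\omega'\in\mathbb{H}$, your argument covers only the algebraic case. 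The repair stays entirely within your framework: the first hypothesis still forces the two underlying scheme points to coincide in every case, because a local ring $\mathcal{O}_{V_S,\mathfrak{p}}$ at a closed point is a discrete valuation ring, hence a proper subring of $k_S(V_S)$, while the local ring at the generic point is all of $k_S(V_S)$, so the two types cannot be mixed; and in the transcendental case evaluation at $P$ and at $P'$ defines two $k_S$-embeddings $k_S(V_S)\hookrightarrow\mathbb{C}$, which the second hypothesis forces to be equal, whence $P=P'$ exactly as in your residue-field argument for closed points. With that second branch added, your proof is complete.
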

\begin{proof}
See \cite[Lemma 7.1]{J-K-S-Y223}. 
\end{proof}

 Let
 \begin{equation*}
\Delta_S=\left\{\begin{bmatrix}
1&0\\0&s
\end{bmatrix}~|~s\in\widehat{\mathbb{Z}}^\times~\textrm{satisfies that $[s^{-1},\,\mathbb{Q}]$
is the identity on $k_S$}\right\}.
\end{equation*}
 
\begin{lemma}\label{DeltaLemma}
If $S$ contains $\Delta_S$, then $\mathcal{F}_S$ is exactly the field of meromorphic
modular functions for $\Gamma_S/\mathbb{Q}^\times$ whose Fourier coefficients belong to $k_S$. 
\end{lemma}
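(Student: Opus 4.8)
The plan is to prove the two inclusions between $\mathcal{F}_S$ and the field $M$ of meromorphic modular functions for $\Gamma_S/\mathbb{Q}^\times$ whose Fourier coefficients lie in $k_S$. By Proposition \ref{FSkS}(ii) every $f\in\mathcal{F}_S$ is already a modular function for $\Gamma_S/\mathbb{Q}^\times$, and conversely every element of $M$ is such a function; hence both $\mathcal{F}_S$ and $M$ sit inside the field $\mathbb{C}\mathcal{F}_S$ of all such modular functions, and the statement reduces to comparing fields of Fourier coefficients. Throughout I use that for $s\in\widehat{\mathbb{Z}}^\times$ the diagonal idele $\mathrm{diag}(1,s)$ lies in $\mathrm{GL}_2(\widehat{\mathbb{Z}})$, so by the description of $\mathrm{Gal}(\mathcal{F}/\mathbb{Q})$ recalled above it acts on $f=\sum_n c_nq_\tau^n\in\mathcal{F}_N$ purely coefficientwise, via $f^{\mathrm{diag}(1,s)}=\sum_n c_n^{\sigma}q_\tau^n$ with $\sigma=[s^{-1},\,\mathbb{Q}]|_{\mathbb{Q}(\zeta_N)}$; this is exactly the normalization under which $k_S$ and $\Delta_S$ are defined.

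For the inclusion $\mathcal{F}_S\subseteq M$, I take $f=\sum_n c_nq_\tau^n\in\mathcal{F}_S\subseteq\mathcal{F}_N$ and show each $c_n\in k_S$. Since $k_S$ is the fixed field in $\mathbb{Q}^\mathrm{ab}$ of the automorphisms $[s^{-1},\,\mathbb{Q}]$ with $s\in\det(S)$, and $c_n\in\mathbb{Q}(\zeta_N)\subseteq\mathbb{Q}^\mathrm{ab}$, it suffices to show that every $\tau\in\mathrm{Gal}(\mathbb{Q}^\mathrm{ab}/k_S)$ fixes $c_n$. Given such a $\tau$, surjectivity of the cyclotomic Artin map $\widehat{\mathbb{Z}}^\times\twoheadrightarrow\mathrm{Gal}(\mathbb{Q}^\mathrm{ab}/\mathbb{Q})$ lets me write $\tau=[s^{-1},\,\mathbb{Q}]$ for some $s\in\widehat{\mathbb{Z}}^\times$; because $\tau$ is trivial on $k_S$, this $s$ satisfies $\mathrm{diag}(1,s)\in\Delta_S\subseteq S$. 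As $f$ is fixed by $S$, the coefficientwise action forces $c_n^{\tau}=c_n$. Since $\tau$ was arbitrary, $c_n\in k_S$. This step is where the hypothesis $\Delta_S\subseteq S$ is genuinely used.

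For the reverse inclusion $M\subseteq\mathcal{F}_S$ I argue by degrees. By Proposition \ref{FSkS}(iii), $k_S$ is algebraically closed in $\mathcal{F}_S$; the same holds for $M$, because any element of $M$ algebraic over $k_S$ is a modular function algebraic over $\mathbb{C}$, hence constant, hence an element of $k_S$. Thus $\mathcal{F}_S$ and $M$ are function fields of geometrically irreducible curves over $k_S$, with common constant field $k_S$. From $\mathcal{F}_S\subseteq M\subseteq\mathbb{C}\mathcal{F}_S$ and Proposition \ref{FSkS}(ii) one gets $\mathbb{C}M=\mathbb{C}\mathcal{F}_S$, as both equal the field of all modular functions for $\Gamma_S/\mathbb{Q}^\times$. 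Because the constant fields of $\mathcal{F}_S$ and $M$ both equal $k_S$, extension of scalars from $k_S$ to $\mathbb{C}$ preserves the degree of the finite (separable, as we are in characteristic $0$) extension $M/\mathcal{F}_S$, so $[M:\mathcal{F}_S]=[\mathbb{C}M:\mathbb{C}\mathcal{F}_S]=1$ and $M=\mathcal{F}_S$.

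The main obstacle is the realization step in the first inclusion: for each prescribed $\tau\in\mathrm{Gal}(\mathbb{Q}^\mathrm{ab}/k_S)$ one must exhibit an honest diagonal element $\mathrm{diag}(1,s)$ of $S$ that induces $\tau$ on Fourier coefficients, and it is precisely $\Delta_S\subseteq S$, together with surjectivity of the cyclotomic Artin map, that guarantees its existence. The degree computation in the second inclusion is then routine, since Proposition \ref{FSkS}(iii) ensures that passing to $\mathbb{C}$ does not enlarge the constant field and hence cannot lower the degree.
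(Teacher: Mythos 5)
Your first inclusion $\mathcal{F}_S\subseteq M$ (writing $M$ for the field of meromorphic modular functions for $\Gamma_S/\mathbb{Q}^\times$ with Fourier coefficients in $k_S$) is correct and complete: the coefficientwise action of $\mathrm{diag}(1,s)$, the surjectivity of $s\mapsto[s^{-1},\mathbb{Q}]$ from $\widehat{\mathbb{Z}}^\times$ onto $\mathrm{Gal}(\mathbb{Q}^{\mathrm{ab}}/\mathbb{Q})$, and the hypothesis $\Delta_S\subseteq S$ are exactly the right ingredients, and your normalization agrees with Shimura's. Bear in mind that the paper gives no argument of its own for this lemma --- its ``proof'' is the citation \cite[Theorem 6]{Shimura75} --- so the issue is whether your argument stands on its own.

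It does not, because the reverse inclusion $M\subseteq\mathcal{F}_S$ has a genuine gap: you assert that $M/\mathcal{F}_S$ is a \emph{finite} extension (and that $M$ is the function field of a curve over $k_S$) with no justification, and this is precisely the non-routine point. Knowing that $k_S$ is the exact constant field of $M$ does not make $M$ algebraic, let alone finite, over $\mathcal{F}_S$: the ambient field $\mathbb{C}\mathcal{F}_S$ has infinite transcendence degree over $k_S$ (it contains $\mathbb{C}$), so an intermediate field $\mathcal{F}_S\subseteq M\subseteq\mathbb{C}\mathcal{F}_S$ with constants $k_S$ could a priori contain an element transcendental over $\mathcal{F}_S$, and then no degree count applies. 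Your closing remark that this half is ``routine'' has it backwards; this half is where the cited theorem of Shimura does its real work.

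The gap is fillable, but only by invoking the $q$-expansion hypothesis a second time. For instance: every element of $\mathbb{C}\mathcal{F}_S$ has an expansion in $q^{1/N_0}$, where $N_0$ is the width of the cusp $\infty$ of $\Gamma_S/\mathbb{Q}^\times$, so expansion at $\infty$ embeds $M$ into $k_S((q^{1/N_0}))$; and $k_S((q^{1/N_0}))$ is linearly disjoint from $\mathbb{C}$ over $k_S$ (compare coefficients of each power of $q^{1/N_0}$ against a $k_S$-basis of the span of the complex scalars involved). Hence any polynomial relation over $\mathbb{C}$ between $h\in M$ and a nonconstant $f\in\mathcal{F}_S$ --- such a relation exists because $\mathbb{C}\mathcal{F}_S$ has transcendence degree $1$ over $\mathbb{C}$ --- descends to a relation over $k_S$, so $h$ is algebraic over $k_S(f)\subseteq\mathcal{F}_S$. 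Now your tensor-product argument, applied elementwise to the finite extension $\mathcal{F}_S(h)/\mathcal{F}_S$ (legitimate since $k_S$ is algebraically closed in $\mathcal{F}_S(h)\subseteq M$ and we are in characteristic $0$, so $\mathbb{C}\otimes_{k_S}\mathcal{F}_S(h)$ is a domain), gives $[\mathcal{F}_S(h):\mathcal{F}_S]=[\mathbb{C}\mathcal{F}_S(h):\mathbb{C}\mathcal{F}_S]=1$, i.e. $h\in\mathcal{F}_S$. Alternatively, Shimura's route avoids degrees altogether: stabilizers in $\mathrm{GL}_2(\widehat{\mathbb{Q}})$ of elements of $\mathcal{F}$ are open, and strong approximation for $\mathrm{SL}_2$ shows $S$ lies in the closure of the subgroup generated by $\mathbb{Q}^\times$, $\Gamma_S$ and $\Delta_S$, so any $h$ fixed by these three (centrality, modularity, coefficients in $k_S$) is fixed by all of $S$.
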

\begin{proof}
See \cite[Theorem 6]{Shimura75}.
\end{proof}

Let $\widehat{K}=K\otimes_\mathbb{Z}\widehat{\mathbb{Z}}$
and $K^\mathrm{ab}$ be the maximal abelian extension of $K$. 
Then it follows from the class field theory that 
the Artin map $\widehat{K}^\times\rightarrow\mathrm{Gal}(K^\mathrm{ab}/K)$ yields a one-to-one
correspondence 
\begin{eqnarray*}
\{\textrm{closed subgroups of $\widehat{K}^\times$ of finite index containing $K^\times$}\}
&\rightarrow&\{\textrm{finite abelian extensions of $K$}\}\\
J&\mapsto&\textrm{$L$ satisfying $\widehat{K}^\times/J\simeq\mathrm{Gal}(L/K)$}
\end{eqnarray*}
(cf. \cite[$\S$IV.7]{Neukirch}). 
Let $\omega\in K\cap\mathbb{H}$. By continuity, the embedding
$q_\omega:K^\times\rightarrow\mathrm{GL}_2^+(\mathbb{Q})$ defined by 
\begin{equation*}
\tau\begin{bmatrix}\omega\\1\end{bmatrix}
=q_\omega(\tau)\begin{bmatrix}\omega\\1\end{bmatrix}\quad(\tau\in K^\times)
\end{equation*}
can be extended to an embedding $(K\otimes_\mathbb{Z}\mathbb{Z}_p)^\times\rightarrow\mathrm{GL}_2(\mathbb{Q}_p)$ for each prime $p$.
Thus we attain an embedding $\widehat{q}_\omega:\widehat{K}^\times\rightarrow\mathrm{GL}_2(\widehat{\mathbb{Q}})$.

\begin{proposition}
With the notations as above, the subgroup $K^\times\widehat{q}_\omega^{\,-1}(S)$ of $\widehat{K}^\times$ corresponds to the subfield $K\left(f(\omega)~|~f\in\mathcal{F}_S~\textrm{is finite at}~\omega\right)$ of $K^\mathrm{ab}$. 
\end{proposition}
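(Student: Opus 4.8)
The plan is to translate the statement into the language of the Artin correspondence recalled just above and then to extract it from Shimura's main theorem of complex multiplication. Write $J_0=K^\times\widehat{q}_\omega^{\,-1}(S)$ and $L_\omega=K\left(f(\omega)\mid f\in\mathcal{F}_S\text{ is finite at }\omega\right)$. First I would check that $J_0$ is eligible for the correspondence, i.e. a closed subgroup of $\widehat{K}^\times$ of finite index containing $K^\times$: it is open (hence closed) since $S$ is open and $\widehat{q}_\omega$ is continuous, it visibly contains $K^\times$, and it has finite index because $S/\mathbb{Q}^\times$ is compact, so $\widehat{q}_\omega^{\,-1}(S)$ meets $\widehat{\mathcal{O}}_K^\times$ in a finite-index open subgroup and the class number of $\mathcal{O}_K$ is finite. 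The engine of the proof is the reciprocity law of the main theorem of complex multiplication (\cite[Theorem 6.31]{Shimura}): for every $s\in\widehat{K}^\times$ and every $f\in\mathcal{F}$ finite at $\omega$ one has $f(\omega)\in K^\mathrm{ab}$ and
\[
f(\omega)^{[s,\,K]}=f^{\widehat{q}_\omega(s)^{-1}}(\omega).
\]
In particular $L_\omega\subseteq K^\mathrm{ab}$.

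Set $J=\{s\in\widehat{K}^\times\mid f(\omega)^{[s,\,K]}=f(\omega)\text{ for all }f\in\mathcal{F}_S\text{ finite at }\omega\}$, the stabiliser of $L_\omega$. The inclusion $J_0\subseteq J$ is immediate from the displayed formula: if $s\in K^\times$ then $[s,K]=\mathrm{id}$ on $K^\mathrm{ab}$ by the reciprocity of class field theory, so $s\in J$; and if $\widehat{q}_\omega(s)\in S$ then $\widehat{q}_\omega(s)^{-1}\in S$ as well, whence $f^{\widehat{q}_\omega(s)^{-1}}=f$ for every $f\in\mathcal{F}_S$ and so $f(\omega)^{[s,\,K]}=f(\omega)$. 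As $J$ is a group this gives $J_0\subseteq J$. Consequently $L_\omega$ is finite abelian over $K$, it corresponds under the Artin map to the subgroup $J$, and the proposition is now \emph{equivalent} to the equality $J_0=J$; it remains to prove the reverse inclusion $J\subseteq J_0$.

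For this I would fix $s\in J$, put $\gamma=\widehat{q}_\omega(s)^{-1}\in\mathrm{GL}_2(\widehat{\mathbb{Q}})$, so that $f^\gamma(\omega)=f(\omega)$ for all $f\in\mathcal{F}_S$ finite at $\omega$, and aim to show $\gamma\in S\cdot\widehat{q}_\omega(K^\times)$, which is exactly $s\in J_0$. Decompose $\gamma=\alpha\beta$ with $\alpha\in\mathrm{GL}_2(\widehat{\mathbb{Z}})$ and $\beta\in\mathrm{GL}_2^+(\mathbb{Q})$, and choose $N$ with $\mathcal{F}_S\subseteq\mathcal{F}_N$. Exploiting the ambiguity $\alpha\mapsto\alpha\epsilon,\ \beta\mapsto\epsilon^{-1}\beta$ for $\epsilon\in\mathrm{SL}_2(\mathbb{Z})$ together with the surjectivity of $\mathrm{SL}_2(\mathbb{Z})\to\mathrm{SL}_2(\mathbb{Z}/N\mathbb{Z})$, I would normalise the reduction $\alpha''$ of $\alpha$ to the diagonal shape $\left[\begin{smallmatrix}1&0\\0&d\end{smallmatrix}\right]$ with $d=\det\alpha''\in(\mathbb{Z}/N\mathbb{Z})^\times$; the excerpt's cyclotomic formula then gives $f^{\alpha''}=\sum_n c_n^{\sigma_d}q_\tau^n$ for $f=\sum_n c_nq_\tau^n\in\mathcal{F}_S$, so the hypothesis becomes $f^{\sigma_d}(\beta\omega)=f(\omega)$ for all such $f$. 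The conceptual point is that the functions in $\mathcal{F}_S$ separate the points of the canonical model $V_S$: once one manufactures a single $\omega'\in\mathbb{H}$ with $\{f\in\mathcal{F}_S:f\text{ finite at }\omega'\}=\{f\in\mathcal{F}_S:f\text{ finite at }\omega\}$ and $f(\omega')=f(\omega)$ for all such $f$, Lemma \ref{twopoints} furnishes $\delta\in\Gamma_S/\mathbb{Q}^\times$ with $\omega=\delta(\omega')$, and since the stabiliser of $\omega$ in $\mathrm{GL}_2^+(\mathbb{Q})$ is precisely $q_\omega(K^\times)$, the resulting relation among $\gamma$, $\beta$ and $\delta$ delivers the membership $\gamma\in S\cdot\widehat{q}_\omega(K^\times)$.

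The reduction in the previous paragraph is where I expect the main obstacle to lie. The finite part of $\gamma$ twists the Fourier coefficients by $\sigma_d$ and does not preserve $\mathcal{F}_S$, so one cannot feed $\omega$ and $\beta\omega$ directly into Lemma \ref{twopoints}. The way through is to absorb the $\sigma_d$-twist back into the argument of the functions by one further application of the main theorem of complex multiplication, recognising $f^{\sigma_d}(\beta\omega)$ as $g(\omega')$ for a suitable $g\in\mathcal{F}_S$ and a CM point $\omega'$ produced by an auxiliary id\`ele whose norm realises $\sigma_d$; this reduces the twisted identity to the untwisted hypothesis required by Lemma \ref{twopoints}. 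Keeping careful track of $\det\gamma=N_{K/\mathbb{Q}}(s)^{-1}$ and of the conditions defining $k_S$ and $\Delta_S$ (Proposition \ref{FSkS} and Lemma \ref{DeltaLemma}) is what makes this bookkeeping delicate, but beyond the reciprocity law and the separation statement of Lemma \ref{twopoints} no genuinely new ingredient is needed.
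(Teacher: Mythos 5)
The paper gives no argument of its own for this proposition: its ``proof'' is the citation to \cite[Proposition 6.33]{Shimura}. So your proposal must be judged as a reconstruction of Shimura's argument, and its skeleton is the right one: translate the statement into the equality $J=J_0$ of subgroups of $\widehat{K}^\times$ via the Artin correspondence, get $J_0\subseteq J$ from the reciprocity law (this part of your write-up is complete and correct), and get $J\subseteq J_0$ from the separation property of $\mathcal{F}_S$ (Lemma \ref{twopoints}) together with the fact that the stabiliser of $\omega$ in $\mathrm{GL}_2^+(\mathbb{Q})$ is $q_\omega(K^\times)$.

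The genuine gap is in the hard inclusion, precisely at the step you yourself flag. After normalising $\gamma=\widehat{q}_\omega(s)^{-1}=\alpha\beta$ so that the hypothesis reads $f^{\sigma_d}(\beta(\omega))=f(\omega)$ for all $f\in\mathcal{F}_S$ finite at $\omega$, the twist $\sigma_d$ must be removed before Lemma \ref{twopoints} applies, and your proposed mechanism --- ``an auxiliary id\`ele whose norm realises $\sigma_d$'' --- is never executed and is circular as stated: to realise $\sigma_d$ by a norm from $K$, equivalently to absorb $\left[\begin{smallmatrix}1&0\\0&d\end{smallmatrix}\right]$ into $S$, you must already know that $d$ lies in the image of $\det(S)$, i.e.\ that $\sigma_d$ is trivial on the relevant fixed field, and nothing in your proposal extracts that from the hypothesis. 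The missing idea is to use the \emph{constants}: every $\nu\in k_S$ is a constant function lying in $\mathcal{F}_S$, because an element $\gamma\in\mathrm{GL}_2(\widehat{\mathbb{Q}})$ acts on constants in $\mathbb{Q}^{\mathrm{ab}}$ through $[\det(\gamma)^{-1},\mathbb{Q}]$ and the definition of $k_S$ says exactly that $S$ fixes $k_S$. Feeding these constants into the hypothesis gives $\nu^{[\det(\gamma)^{-1},\,\mathbb{Q}]}=\nu$ for all $\nu\in k_S$, hence $\det\gamma\in\mathbb{Q}_{>0}^\times\det(S)$ since $\det(S)$ is an open, hence closed, subgroup of $\widehat{\mathbb{Q}}^\times$. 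Now choose $u\in S$ with $\det(u^{-1}\gamma)\in\mathbb{Q}_{>0}^\times$ and replace $\gamma$ by $u^{-1}\gamma$; this is harmless because $f^{u^{-1}\gamma}=f^{\gamma}$ for $f\in\mathcal{F}_S$, and because $\gamma$ commutes with $q_\omega(K^\times)$, so proving $\gamma\in S\,q_\omega(K^\times)$ still yields $s\in K^\times\widehat{q}_\omega^{\,-1}(S)$. Writing $u^{-1}\gamma=\alpha_1\beta_1$ with $\alpha_1\in\mathrm{GL}_2(\widehat{\mathbb{Z}})$, $\beta_1\in\mathrm{GL}_2^+(\mathbb{Q})$, one gets $\det\alpha_1\in\mathbb{Q}_{>0}^\times\cap\widehat{\mathbb{Z}}^\times=\{1\}$, so $\alpha_1\in\mathrm{SL}_2(\widehat{\mathbb{Z}})$; fixing $N$ with $\mathbb{Q}^\times U(N)\subseteq S$, where $U(N)=\{g\in\mathrm{GL}_2(\widehat{\mathbb{Z}})~|~g\equiv I_2\Mod{NM_2(\widehat{\mathbb{Z}})}\}$ (so that also $\mathcal{F}_S\subseteq\mathcal{F}_N$), and lifting the reduction of $\alpha_1$ to some $\epsilon\in\mathrm{SL}_2(\mathbb{Z})$, the hypothesis becomes the untwisted identity $f(\epsilon\beta_1(\omega))=f(\omega)$ for all $f\in\mathcal{F}_S$ finite at $\omega$. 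Only now do Lemma \ref{twopoints} and the stabiliser computation apply: $\delta\epsilon\beta_1=q_\omega(\lambda)$ for some $\delta\in\Gamma_S$, $\lambda\in K^\times$, whence $\gamma=u(\alpha_1\epsilon^{-1})\delta^{-1}q_\omega(\lambda)\in S\,q_\omega(K^\times)$, since $\alpha_1\epsilon^{-1}\in U(N)\subseteq S$ and $\delta^{-1}\in\Gamma_S\subseteq S$. Two further blemishes: your appeal to $\Delta_S$ and Lemma \ref{DeltaLemma} is out of place, since $\Delta_S\subseteq S$ is not a hypothesis of this proposition and is not needed; and the equality of the sets of functions finite at $\omega$ and at $\epsilon\beta_1(\omega)$, which Lemma \ref{twopoints} requires, should be justified (it follows because the reciprocity law identifies the transformed values with Galois conjugates of finite values, and Galois conjugation preserves finiteness).
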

\begin{proof}
See \cite[Proposition 6.33]{Shimura}.
\end{proof}

\section {Form class groups obtained by canonical models}

We shall show that 
a subgroup $\Gamma$ of $\mathrm{SL}_2(\mathbb{Z})$, which satisfies that
$\langle\Gamma,\,-I_2\rangle/\langle-I_2\rangle\simeq\Gamma_S/\mathbb{Q}^\times$
for a certain subgroup $S$ of $\mathrm{GL}_2(\widehat{\mathbb{Q}})$, 
induces a form class group stated in Definition \ref{induce}.
\par
Let $k_{D_\mathcal{O},\,N}$ be the fixed field of $\mathbb{Q}(\zeta_N)$ by the subgroup
$\{a+N\mathbb{Z}~|~Q=ax^2+bxy+cy^2\in\mathcal{Q}(D_\mathcal{O},\,N)\}$
of $(\mathbb{Z}/N\mathbb{Z})^\times$. 
By Remark \ref{cyclotomic} (i), we have $k_{D_\mathcal{O},\,N}=\mathbb{Q}(\zeta_N)\cap K$ which is $\mathbb{Q}$ or $K$. 

\begin{lemma}\label{bar}
Let $f$ be a meromorphic modular function for $\Gamma_1(N)$
whose Fourier coefficients belong to $\mathbb{Q}(\zeta_N)$, and let $z\in\mathbb{H}$.  
If $f$ is finite at $z$, then 
$f^{\left[\begin{smallmatrix}1&0\\0&-1\end{smallmatrix}\right]}$ is finite at $-\overline{z}$ and 
\begin{equation*}
f^{\left[\begin{smallmatrix}1&0\\0&-1\end{smallmatrix}\right]}(-\overline{z})=\overline{f(z)}. 
\end{equation*}
\end{lemma}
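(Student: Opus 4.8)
The plan is to reduce the assertion to a comparison of Fourier expansions near the cusp $\mathrm{i}\infty$, where the $q$-expansion genuinely converges, and then to propagate the resulting identity to the interior point $z$ by the identity theorem for meromorphic functions. This detour is what makes the argument correct: one cannot legitimately substitute $q_z$ into the $q$-series of $f$ at an arbitrary interior point, since that series need only converge for $\mathrm{Im}(\tau)$ large.

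First I would record the Fourier data. Since $\begin{bmatrix}1&1\\0&1\end{bmatrix}\in\Gamma_1(N)$, the function $f$ is invariant under $\tau\mapsto\tau+1$ and hence admits an expansion $f=\sum_{n\gg-\infty}c_nq_\tau^n$ with $c_n\in\mathbb{Q}(\zeta_N)$ and $q_\tau=e^{2\pi\mathrm{i}\tau}$. The matrix $\left[\begin{smallmatrix}1&0\\0&-1\end{smallmatrix}\right]$ reduces modulo $N$ to an element of the shape $\left[\begin{smallmatrix}1&0\\0&d\end{smallmatrix}\right]$ with $d\equiv-1\Mod{N}$, so by the formula recalled in Section \ref{defineFCG} we have $f^{\left[\begin{smallmatrix}1&0\\0&-1\end{smallmatrix}\right]}=\sum_{n\gg-\infty}c_n^{\sigma_{-1}}q_\tau^n$. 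Under the fixed embedding $\mathbb{Q}(\zeta_N)\hookrightarrow\mathbb{C}$ the automorphism $\sigma_{-1}$ is complex conjugation, because $\sigma_{-1}(\zeta_N)=\zeta_N^{-1}=\overline{\zeta_N}$; hence $f^{\left[\begin{smallmatrix}1&0\\0&-1\end{smallmatrix}\right]}=\sum_{n\gg-\infty}\overline{c_n}\,q_\tau^n$, where the bar now denotes ordinary conjugation of the coefficients.

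Next I would introduce the reflected function $h(\tau)=\overline{f(-\overline{\tau})}$. Since $\tau\mapsto-\overline{\tau}$ preserves $\mathbb{H}$ and is anti-holomorphic while $f$ is meromorphic, the composite followed by conjugation restores holomorphy, so $h$ is meromorphic on $\mathbb{H}$. For $\mathrm{Im}(\tau)$ large, $-\overline{\tau}$ also has large imaginary part, so the convergent $q$-expansion of $f$ applies there, and the elementary identity $\overline{e^{-2\pi\mathrm{i}n\overline{\tau}}}=e^{2\pi\mathrm{i}n\tau}$ gives $h(\tau)=\sum_{n\gg-\infty}\overline{c_n}\,q_\tau^n$. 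Thus $h$ and $f^{\left[\begin{smallmatrix}1&0\\0&-1\end{smallmatrix}\right]}$ share the same Fourier expansion, so they agree on a neighborhood of $\mathrm{i}\infty$, whence $h=f^{\left[\begin{smallmatrix}1&0\\0&-1\end{smallmatrix}\right]}$ identically on $\mathbb{H}$ by the identity theorem.

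Finally I would specialize at $\tau=-\overline{z}$, which lies in $\mathbb{H}$ because $\mathrm{Im}(-\overline{z})=\mathrm{Im}(z)>0$, and observe that $-\overline{(-\overline{z})}=z$, so that $h(-\overline{z})=\overline{f(z)}$. Consequently $f^{\left[\begin{smallmatrix}1&0\\0&-1\end{smallmatrix}\right]}=h$ is finite at $-\overline{z}$ precisely because $f$ is finite at $z$, and its value there equals $\overline{f(z)}$, as claimed. The step I would watch most carefully is exactly the convergence issue flagged at the outset: the whole point of passing through $h$ and invoking the identity theorem, rather than substituting directly into the $q$-series, is to transport the near-cusp equality to the given point $z$ without assuming convergence of the expansion at $z$ itself.
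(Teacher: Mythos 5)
Your proof is correct, and its core computation is the same as the paper's: identify the action of $\left[\begin{smallmatrix}1&0\\0&-1\end{smallmatrix}\right]$ on Fourier coefficients with $\sigma_{-1}$, note that $\sigma_{-1}$ is complex conjugation on $\mathbb{Q}(\zeta_N)$, and conjugate the $q$-expansion term by term. Where you genuinely diverge is in how that termwise identity is turned into a statement at the point $z$: the paper simply writes
\begin{equation*}
\overline{f(z)}=\overline{\sum_{n\gg-\infty}c_ne^{2\pi\mathrm{i}nz}}=
\sum_{n\gg-\infty}\overline{c_n}e^{2\pi\mathrm{i}n(-\overline{z})}=
f^{\left[\begin{smallmatrix}1&0\\0&-1\end{smallmatrix}\right]}(-\overline{z}),
\end{equation*}
which tacitly substitutes $z$ into the $q$-series and evaluates $f^{\left[\begin{smallmatrix}1&0\\0&-1\end{smallmatrix}\right]}$ at $-\overline{z}$ by its series; for a function that is merely meromorphic on $\mathbb{H}$ this is literally valid only when $z$ lies in the region where both expansions converge (above all poles). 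Your detour through the reflected function $h(\tau)=\overline{f(-\overline{\tau})}$ --- matching expansions near $\mathrm{i}\infty$, invoking the identity theorem to get $h=f^{\left[\begin{smallmatrix}1&0\\0&-1\end{smallmatrix}\right]}$ on all of $\mathbb{H}$, then specializing at $-\overline{z}$ --- closes exactly this gap, and it also makes the finiteness assertion (that $f^{\left[\begin{smallmatrix}1&0\\0&-1\end{smallmatrix}\right]}$ is finite at $-\overline{z}$) an immediate consequence rather than an implicit one. So your argument buys rigor at every interior point at the cost of one extra analytic-continuation step, while the paper's version is shorter but, read strictly, proves the identity only where the $q$-expansion converges.
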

\begin{proof}
If we let $\displaystyle f(\tau)=\sum_{n\gg-\infty}c_nq_\tau^n$ ($c_n\in\mathbb{C}$, $\tau\in\mathbb{H}$)
be the Fourier expansion of $f$ with respect to $q_\tau=e^{2\pi\mathrm{i}\tau}$, then we find that
\begin{equation*}
\overline{f(z)}=\overline{\sum_{n\gg-\infty}c_ne^{2\pi\mathrm{i}nz}}=
\sum_{n\gg-\infty}\overline{c_n}e^{2\pi\mathrm{i}n(-\overline{z})}=
f^{\left[\begin{smallmatrix}1&0\\0&-1\end{smallmatrix}\right]}(-\overline{z}).
\end{equation*}
\end{proof}

\begin{theorem}\label{S}
Let $N$ be a positive integer and $\Gamma$ be a subgroup of $\mathrm{SL}_2(\mathbb{Z})$ containing $\Gamma_1(N)$. 
Let $K$ be an imaginary quadratic field and $\mathcal{O}$ be an order of discriminant $D_\mathcal{O}$ in $K$. 
If there is an open subgroup $S$ of $\mathrm{GL}_2(\widehat{\mathbb{Q}})$ containing $\mathbb{Q}^\times$
such that $S/\mathbb{Q}^\times$ is compact and
\begin{enumerate}
\item[\textup{(i)}] $\Gamma_S/\mathbb{Q}^\times\simeq\langle\Gamma,\,-I_2\rangle/\langle-I_2\rangle$,
\item[\textup{(ii)}] $k_S$ is a subfield of $k_{D_\mathcal{O},\,N}=\mathbb{Q}(\zeta_N)\cap K$, 
\item[\textup{(iii)}] $S$ contains $\Delta_S$,
\end{enumerate}
then $\Gamma$ induces a form class group of discriminant $D_\mathcal{O}$ and level $N$. 
\end{theorem}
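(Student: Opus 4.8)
The plan is to verify the two conditions of Definition \ref{induce}. Condition (i) holds by hypothesis, so the whole task is to exhibit a subgroup $P$ with $P_{1,\,N}(\mathcal{O},\,N)\subseteq P\subseteq P(\mathcal{O},\,N)$ for which $[Q]\mapsto[[\omega_Q,\,1]]$ is a well-defined bijection $\mathcal{C}_\Gamma(D_\mathcal{O},\,N)\to I(\mathcal{O},\,N)/P$. I would build $P$ on top of the two isomorphisms available for $\Gamma_1(N)$, namely $\rho_{D_\mathcal{O},\,N}$ of Proposition \ref{formclassgroup} and $\phi_{D_\mathcal{O},\,N}$ of Proposition \ref{Galois}, together with the natural surjection $\mathcal{C}_{\Gamma_1(N)}(D_\mathcal{O},\,N)\twoheadrightarrow\mathcal{C}_\Gamma(D_\mathcal{O},\,N)$ coming from $\Gamma_1(N)\subseteq\Gamma$. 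Since $\rho_{D_\mathcal{O},\,N}$, $\phi_{D_\mathcal{O},\,N}$ and the Artin isomorphism $\mathcal{C}(\mathcal{O},\,N)\simeq\mathrm{Gal}(K_{\mathcal{O},\,N}/K)$ are compatible (the content of the diagram in Figure \ref{homo}), everything reduces to comparing the equivalence relation $\sim_\Gamma$ with a congruence relation coming from a subfield of $K_{\mathcal{O},\,N}$.

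First I would pin down $\mathcal{F}_S$. By (iii) and Lemma \ref{DeltaLemma}, $\mathcal{F}_S$ is precisely the field of meromorphic modular functions for $\Gamma_S/\mathbb{Q}^\times$ with Fourier coefficients in $k_S$, which by (i) I may identify with the field of modular functions for $\Gamma$ with coefficients in $k_S$; by (ii) $k_S\subseteq\mathbb{Q}(\zeta_N)\cap K$, so $\mathcal{F}_S\subseteq\mathcal{F}_N$. Set $M=K\bigl(f(\tau_\mathcal{O})~|~f\in\mathcal{F}_S~\text{is finite at }\tau_\mathcal{O}\bigr)$. Because $\Gamma\subseteq\mathrm{SL}_2(\mathbb{Z})$ and $\mathbb{Q}\subseteq k_S$, the level-one field (in particular $j$) lies in $\mathcal{F}_S$, so $M$ contains the ring class field $K_{\mathcal{O},\,1}=K(j(\tau_\mathcal{O}))$; on the other hand $\mathcal{F}_S\subseteq\mathcal{F}_N$ and \eqref{generation1} give $M\subseteq K_{\mathcal{O},\,N}$. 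Thus $K_{\mathcal{O},\,1}\subseteq M\subseteq K_{\mathcal{O},\,N}$. I then define $P$ by
\begin{equation*}
P/P_{1,\,N}(\mathcal{O},\,N)=\rho_{D_\mathcal{O},\,N}\Bigl(\phi_{D_\mathcal{O},\,N}^{-1}\bigl(\mathrm{Gal}(K_{\mathcal{O},\,N}/M)\bigr)\Bigr).
\end{equation*}
As $\phi_{D_\mathcal{O},\,N}$ and $\rho_{D_\mathcal{O},\,N}$ are isomorphisms, $P$ is a subgroup of $I(\mathcal{O},\,N)$ containing $P_{1,\,N}(\mathcal{O},\,N)$, and the inclusions $K_{\mathcal{O},\,1}\subseteq M\subseteq K_{\mathcal{O},\,N}$ translate, since $\mathrm{Gal}(K_{\mathcal{O},\,N}/K_{\mathcal{O},\,1})$ corresponds to $P(\mathcal{O},\,N)/P_{1,\,N}(\mathcal{O},\,N)$, into $P_{1,\,N}(\mathcal{O},\,N)\subseteq P\subseteq P(\mathcal{O},\,N)$, as required.

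The heart of the proof is the equivalence $Q\sim_\Gamma Q'\Longleftrightarrow[[\omega_Q,\,1]]\equiv[[\omega_{Q'},\,1]]\pmod P$ for $Q,\,Q'\in\mathcal{Q}(D_\mathcal{O},\,N)$, whose right-hand side is, by the definition of $P$, the same as $\phi_{D_\mathcal{O},\,N}([Q])$ and $\phi_{D_\mathcal{O},\,N}([Q'])$ agreeing on $M$. To compute the latter I would restrict the formula of Proposition \ref{Galois} to $\mathcal{F}_S$: writing $\left[\begin{smallmatrix}1&-\check{a}(b+b_\mathcal{O})/2\\0&\check{a}\end{smallmatrix}\right]=\left[\begin{smallmatrix}1&0\\0&\check{a}\end{smallmatrix}\right]\left[\begin{smallmatrix}1&-\check{a}(b+b_\mathcal{O})/2\\0&1\end{smallmatrix}\right]$ and noting via Remark \ref{cyclotomic} that $a+N\mathbb{Z}$ lies in the subgroup fixing $k_{D_\mathcal{O},\,N}\supseteq k_S$, the coefficient twist $\sigma_{\check{a}}$ acts trivially on the coefficients of every $f\in\mathcal{F}_S$. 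Hence $f^{\left[\begin{smallmatrix}1&-\check{a}(b+b_\mathcal{O})/2\\0&\check{a}\end{smallmatrix}\right]}=f\circ v_Q$ with $v_Q=\left[\begin{smallmatrix}1&-\check{a}(b+b_\mathcal{O})/2\\0&1\end{smallmatrix}\right]\in\mathrm{SL}_2(\mathbb{Z})$, and $\phi_{D_\mathcal{O},\,N}([Q])$ sends $f(\tau_\mathcal{O})$ to $f(z_Q)$ where $z_Q=v_Q(-\overline{\omega}_Q)$. Consequently the two automorphisms agree on $M$ exactly when $f(z_Q)=f(z_{Q'})$ for all $f\in\mathcal{F}_S$ finite there, and Lemma \ref{twopoints} turns this into $z_Q=\gamma(z_{Q'})$ for some $\gamma\in\Gamma_S/\mathbb{Q}^\times$. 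Finally, using $-\overline{\omega}_Q=\iota(\omega_Q)$ for the reflection $\iota:\omega\mapsto-\overline{\omega}$ and the identity $\iota\circ\gamma=(w\gamma w)\circ\iota$ with $w=\left[\begin{smallmatrix}1&0\\0&-1\end{smallmatrix}\right]$, I would transport this $\Gamma_S$-relation between $z_Q$ and $z_{Q'}$ into a $\Gamma$-relation between $\omega_Q$ and $\omega_{Q'}$, which by the standard dictionary between forms and their CM points is precisely $Q\sim_\Gamma Q'$; here Lemma \ref{bar}, governing how values and poles behave under $z\mapsto-\overline{z}$, is what makes the reflection step and the matching of finiteness loci rigorous.

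With the displayed equivalence in hand, $[Q]\mapsto[[\omega_Q,\,1]]$ descends to a well-defined injection $\mathcal{C}_\Gamma(D_\mathcal{O},\,N)\hookrightarrow I(\mathcal{O},\,N)/P$, and its surjectivity is inherited from the $\Gamma_1(N)$ case (Proposition \ref{formclassgroup}) through the surjection onto $\mathcal{C}_\Gamma(D_\mathcal{O},\,N)$; this yields the bijection demanded by Definition \ref{induce}. The main obstacle I anticipate is the last step of the central equivalence: converting the $\Gamma_S$-orbit relation among the auxiliary points $z_Q$ back into $\Gamma$-equivalence of the forms while keeping careful track of the reflection $\iota$, of the $Q$-dependent unipotents $v_Q$, and of the finiteness loci, and checking that the abstract identification (i) is realized as an equality of Fuchsian groups acting on $\mathbb{H}$ so that $\Gamma_S$-equivalence genuinely coincides with $\Gamma$-equivalence. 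Assumption (ii), $k_S\subseteq K$, is exactly what makes the coefficient twist $\sigma_{\check{a}}$ vanish and thereby keeps these auxiliary points honestly comparable, so I expect it to be the decisive hypothesis in this step.
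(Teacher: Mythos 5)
Your scaffolding (identify $\mathcal{F}_S$ via Lemma \ref{DeltaLemma}, specialize at $\tau_\mathcal{O}$ to get a subfield of $K_{\mathcal{O},\,N}$, pull back the corresponding Galois subgroup to define $P$, and use Lemma \ref{twopoints} to match agreement of Galois automorphisms with equivalence of forms) is the same as the paper's, and your derivation of $P\subseteq P(\mathcal{O},\,N)$ from $K_{\mathcal{O},\,1}\subseteq M$ is a legitimate mild variant of the paper's appeal to Remark \ref{full}(ii). But there is a genuine gap, located exactly at the step you yourself flag as the ``main obstacle.'' Write $w=\left[\begin{smallmatrix}1&0\\0&-1\end{smallmatrix}\right]$. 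Because you generate $M$ by the \emph{untwisted} values $f(\tau_\mathcal{O})$, $f\in\mathcal{F}_S$, the formula of Proposition \ref{Galois} sends these generators to values at reflected points: $f(\tau_\mathcal{O})\mapsto f(-\overline{\omega}_Q)$ (your unipotent $v_Q$ is actually irrelevant, since $v_Q\in\Gamma_1(N)$ and $f$ is $\Gamma_1(N)$-invariant, so $f(z_Q)=f(-\overline{\omega}_Q)$). Lemma \ref{twopoints} then yields $-\overline{\omega}_Q=\gamma(-\overline{\omega}_{Q'})$ with $\gamma\in\Gamma_S/\mathbb{Q}^\times$, i.e. $\omega_Q=(w\gamma w)(\omega_{Q'})$, and to conclude $[Q]=[Q']$ in $\mathcal{C}_\Gamma(D_\mathcal{O},\,N)$ you need $w\gamma w$ to lie in $\langle\Gamma,\,-I_2\rangle$, i.e. $w\Gamma w=\Gamma$. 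This $w$-stability is neither a hypothesis nor proved in your proposal, and the hypotheses on $S$ do not supply it (conjugating, $wSw$ satisfies (i)--(iii) for the group $w\Gamma w$, not for $\Gamma$). Equivalently: agreement of $\phi([Q])$ and $\phi([Q'])$ on your field $M$ characterizes $\Gamma$-equivalence of the \emph{opposite} forms $ax^2-bxy+cy^2$, that is, the relation $\sim_{w\Gamma w}$; so without $w$-stability your map $\psi$ is not even well defined on $\mathcal{C}_\Gamma(D_\mathcal{O},\,N)$ --- both the well-definedness and the injectivity directions of your central equivalence break at the same point.

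The paper's proof is engineered precisely to avoid this. It generates $K_{\mathcal{O},\,N}$ and its subfield $L$ by the \emph{twisted} values $f^{w}(\tau_\mathcal{O})$ (legitimate by (\ref{KKF}), which follows from (\ref{generation1}) and (\ref{generation2})), so that Proposition \ref{Galois} gives the uniform formula $f^{w}(\tau_\mathcal{O})\mapsto f^{w}(-\overline{\omega}_Q)$, with no $Q$-dependent matrix surviving. Then Lemma \ref{bar} converts $f^{w}(-\overline{\omega}_Q)$ into $\overline{f(\omega_Q)}$: the reflection hits the function and the point \emph{simultaneously}, so after complex conjugation one compares $f(\omega_Q)$ with $f(\omega_{Q'})$ for $f$ still ranging over $\mathcal{F}_S$ and for points still in $\mathbb{H}$ on the correct side; Lemma \ref{twopoints} then directly produces $\gamma\in\Gamma$ with $\omega_Q=\gamma(\omega_{Q'})$, and conjugation of the group $\Gamma$ by $w$ is never needed. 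To salvage your version you would have to either prove that every subgroup of $\mathrm{SL}_2(\mathbb{Z})$ containing $\Gamma_1(N)$ is normalized by $w$ (a nontrivial group-theoretic claim you do not address), or switch to the twisted generators $f^{w}(\tau_\mathcal{O})$ as the paper does.
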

\begin{proof}
Let $F$ be the field of meromorphic modular functions for $\Gamma_1(N)$ whose
Fourier coefficients belong to $k_{D_\mathcal{O},\,N}$. 
We observe that if $f\in F$ is finite at $\tau_\mathcal{O}$, then 
\begin{eqnarray*}
\overline{f(\tau_\mathcal{O})}&=&f^{\left[\begin{smallmatrix}1&0\\0&-1\end{smallmatrix}\right]}(-\overline{\tau}_\mathcal{O})
\quad\textrm{by Lemma \ref{bar}}\\
&=&f^{\left[\begin{smallmatrix}1&0\\0&-1\end{smallmatrix}\right]}\left(\begin{bmatrix}1&b_\mathcal{O}\\0&1\end{bmatrix}(\tau_\mathcal{O})\right)\\
&=&f^{\left[\begin{smallmatrix}1&0\\0&-1\end{smallmatrix}\right]
\left[\begin{smallmatrix}1&b_\mathcal{O}\\0&1\end{smallmatrix}\right]}(\tau_\mathcal{O})
\quad\textrm{since $f^{\left[\begin{smallmatrix}1&0\\0&-1\end{smallmatrix}\right]}$ belongs to $\mathcal{F}_N$}\\
&=&f^{\left[\begin{smallmatrix}1&b_\mathcal{O}\\0&-1\end{smallmatrix}\right]}(\tau_\mathcal{O})\\
&=&f^{\left[\begin{smallmatrix}1&-b_\mathcal{O}\\0&1\end{smallmatrix}\right]
\left[\begin{smallmatrix}1&0\\0&-1\end{smallmatrix}\right]}(\tau_\mathcal{O})\\
&=&f^{\left[\begin{smallmatrix}1&0\\0&-1\end{smallmatrix}\right]}(\tau_\mathcal{O})\quad\textrm{because $f$ is modular for $\Gamma_1(N)$}.
\end{eqnarray*}
Moreover, since 
\begin{equation*}
\mathcal{F}_{\Gamma_1(N),\,\mathbb{Q}}\subseteq\left\{f^{\left[\begin{smallmatrix}1&0\\0&-1\end{smallmatrix}\right]}~|~
f\in F\right\}\subseteq\mathcal{F}_N, 
\end{equation*}
we get by (\ref{generation1}) and (\ref{generation2}) that
\begin{equation}\label{KKF}
K_{\mathcal{O},\,N}=K\left(f^{\left[\begin{smallmatrix}1&0\\0&-1\end{smallmatrix}\right]}(\tau_\mathcal{O})~|~f\in F~\textrm{is finite at $\tau_\mathcal{O}$}\right). 
\end{equation}
Let $Q=ax^2+bxy+cy^2\in\mathcal{Q}(D_\mathcal{O},\,N)$ and $f\in F$ be finite at $\tau_\mathcal{O}$. 
If we let $\phi=\phi_{D_\mathcal{O},\,N}:
\mathcal{C}_{\Gamma_1(N)}(D_\mathcal{O},\,N)\stackrel{\sim}{\rightarrow}
\mathrm{Gal}(K_{\mathcal{O},\,N}/K)$ be the isomorphism stated in Proposition \ref{Galois}, then we achieve that
\begin{eqnarray*}
f^{\left[\begin{smallmatrix}1&0\\0&-1\end{smallmatrix}\right]}(\tau_\mathcal{O})^{\phi([Q])}&=&
f^{\left[\begin{smallmatrix}1&0\\0&-1\end{smallmatrix}\right]\left[
\begin{smallmatrix}1 & -\check{a}\left(\frac{b+b_\mathcal{O}}{2}\right)\\0&\check{a}
\end{smallmatrix}\right]}(-\overline{\omega}_Q)\quad\textrm{where $\check{a}$
is an integer such that $a\check{a}\equiv1\Mod{N}$}\\
&&\hspace{5cm}\textrm{by Proposition \ref{Galois}}\\
&=&f^{\left[\begin{smallmatrix}1 & -\check{a}\left(\frac{b+b_\mathcal{O}}{2}\right)\\0&-\check{a}
\end{smallmatrix}\right]}(-\overline{\omega}_Q)\\
&=&f^{\left[\begin{smallmatrix}1 & \frac{b+b_\mathcal{O}}{2}\\0&1
\end{smallmatrix}\right]
\left[\begin{smallmatrix}1 & 0\\0&\check{a}
\end{smallmatrix}\right]\left[\begin{smallmatrix}1&0\\0&-1\end{smallmatrix}\right]}(-\overline{\omega}_Q)\\
&=&f^{\left[\begin{smallmatrix}1 & 0\\0&\check{a}
\end{smallmatrix}\right]\left[\begin{smallmatrix}1&0\\0&-1\end{smallmatrix}\right]}(-\overline{\omega}_Q)\quad\textrm{since $f$ is modular for 
$\Gamma_1(N)$}\\
&=&f^{\left[\begin{smallmatrix}1&0\\0&-1\end{smallmatrix}\right]}(-\overline{\omega}_Q)\quad\textrm{because
the Fourier coefficients of $f$ lie in $k_{D_\mathcal{O},\,N}$}. 
\end{eqnarray*}
Thus $\phi$ can be described as
\begin{eqnarray*}
\phi~:~\mathcal{C}_{\Gamma_1(N)}(D_\mathcal{O},\,N) & \stackrel{\sim}{\rightarrow} & \mathrm{Gal}(K_{\mathcal{O},\,N}/K)\\
\mathrm{[}Q]=[ax^2+bxy+cy^2] & \mapsto & \left(
f^{\left[\begin{smallmatrix}1&0\\0&-1\end{smallmatrix}\right]}(\tau_\mathcal{O})\mapsto f^{\left[\begin{smallmatrix}1&0\\0&-1\end{smallmatrix}\right]}(-\overline{\omega}_Q)~|~f\in F~\textrm{is finite at $\tau_\mathcal{O}$}\right).\end{eqnarray*}
By (i), (iii) and
Lemma \ref{DeltaLemma}, $\mathcal{F}_S$ is equal to the field of meromorphic modular functions
for $\Gamma$ ($\supseteq\Gamma_1(N)$) whose Fourier coefficients belong to $k_S$ 
($\subseteq k_{D_\mathcal{O},\,N}$). 
So we deduce from (\ref{KKF}) that the field
\begin{equation*}
L=K\left(
f^{\left[\begin{smallmatrix}1&0\\0&-1\end{smallmatrix}\right]}(\tau_\mathcal{O})~|~f\in\mathcal{F}_S~\textrm{is finite at}~\tau_\mathcal{O}\right)
\end{equation*}
is a subfield of $K_{\mathcal{O},\,N}$, from which we obtain the surjective homomorphism
\begin{eqnarray*}
\phi'~:~\mathcal{C}_{\Gamma_1(N)}(D_\mathcal{O},\,N) &\twoheadrightarrow&\mathrm{Gal}(L/K)\\
\mathrm{[}Q] & \mapsto & \left(
f^{\left[\begin{smallmatrix}1&0\\0&-1\end{smallmatrix}\right]}(\tau_\mathcal{O})\mapsto
f^{\left[\begin{smallmatrix}1&0\\0&-1\end{smallmatrix}\right]}(-\overline{\omega}_Q)~|~f\in\mathcal{F}_S~\textrm{is finite at}~\tau_\mathcal{O}\right).
\end{eqnarray*}
\par
Now, we want to verify that the map
\begin{eqnarray*}
\psi~:~\mathcal{C}_\Gamma(D_\mathcal{O},\,N) &\rightarrow&\mathrm{Gal}(L/K)\\
\mathrm{[}Q] & \mapsto & \left(
f^{\left[\begin{smallmatrix}1&0\\0&-1\end{smallmatrix}\right]}(\tau_\mathcal{O})\mapsto
f^{\left[\begin{smallmatrix}1&0\\0&-1\end{smallmatrix}\right]}(-\overline{\omega}_Q)~|~f\in\mathcal{F}_S~\textrm{is finite at}~\tau_\mathcal{O}\right)
\end{eqnarray*}
is a well-defined bijection. We deduce that for $Q,\,Q'\in\mathcal{Q}(D_\mathcal{O},\,N)$
\begin{eqnarray*}
&&f^{\left[\begin{smallmatrix}1&0\\0&-1\end{smallmatrix}\right]}(-\overline{\omega}_Q)=
f^{\left[\begin{smallmatrix}1&0\\0&-1\end{smallmatrix}\right]}(-\overline{\omega}_{Q'})~
\textrm{for all $f\in\mathcal{F}_S$ which are finite at $\tau_\mathcal{O}$}\\
&\Longleftrightarrow&\overline{f^{\left[\begin{smallmatrix}1&0\\0&-1\end{smallmatrix}\right]}(-\overline{\omega}_Q)}=
\overline{f^{\left[\begin{smallmatrix}1&0\\0&-1\end{smallmatrix}\right]}(-\overline{\omega}_{Q'})}~\textrm{for all $f\in\mathcal{F}_S$ which are finite at $\tau_\mathcal{O}$}\\
&\Longleftrightarrow&f(\omega_Q)=f(\omega_{Q'})
~\textrm{for all $f\in\mathcal{F}_S$ which are finite at $\tau_\mathcal{O}$ by (ii)
and Lemma \ref{bar}}\\
&\Longleftrightarrow&\omega_Q=\gamma(\omega_{Q'})~\textrm{for some $\gamma\in\Gamma$ by Lemma \ref{twopoints}}\\
&\Longleftrightarrow&[Q]=[Q']~\textrm{in}~\mathcal{C}_\Gamma(D_\mathcal{O},\,N). 
\end{eqnarray*}
This argument implies that $\psi$ is a well-defined injection. Moreover, 
we see from the commutative diagram in Figure \ref{comm1} that $\psi$ is surjective,
hence $\psi$ is a well-defined bijection as desired.
\begin{figure}[H]
\begin{equation*}
\xymatrixcolsep{3pc}
\xymatrix{
\mathcal{C}_{\Gamma_1(N)}(D_\mathcal{O},\,N) \ar@{->>}[rr]^{\textrm{the natural surjection}}^{}
\ar@{->>}[ddr]_{\phi'}^{\rotatebox[origin=c]{150}{}}
 & & \mathcal{C}_{\Gamma}(D_\mathcal{O},\,N) \ar@{->}[ddl]^{\psi}\\\\
& \mathrm{Gal}(L/K)  & }
\end{equation*}
\caption{A commutative diagram for surjectivity of $\psi$}
\label{comm1}
\end{figure}
\par
Let $\rho=\rho_{D_\mathcal{O},\,N}:\mathcal{C}_{\Gamma_1(N)}(D_\mathcal{O},\,N)
\stackrel{\sim}{\rightarrow}I(\mathcal{O},\,N)/P_{1,\,N}(\mathcal{O},\,N)$ be the isomorphism given in Proposition 
\ref{formclassgroup}, and let $P$ be the subgroup of $I(\mathcal{O},\,N)$ containing
$P_{1,\,N}(\mathcal{O},\,N)$ so that the 
image of the subgroup $\mathrm{Gal}(K_{\mathcal{O},\,N}/L)$ of 
$\mathrm{Gal}(K_{\mathcal{O},\,N}/K)$ under
the isomorphism
\begin{equation*}
\begin{array}{rcccl}
\rho\circ\phi^{-1}:&\mathrm{Gal}(K_{\mathcal{O},\,N}/K)& \stackrel{\sim}{\rightarrow}&
I(\mathcal{O},\,N)/P_{1,\,N}(\mathcal{O},\,N),&\\
&\phi([Q])&\mapsto&[[\omega_Q,\,1]]&\textrm{for $Q\in\mathcal{Q}(D_\mathcal{O},\,N)$}
\end{array}
\end{equation*}
is $P/P_{1,\,N}(\mathcal{O},\,N)$. Then we get the commutative diagram in Figure \ref{Figure4}, 
which yields the well-defined bijection
\begin{equation*}
\begin{array}{ccc}
\mathcal{C}_\Gamma(D_\mathcal{O},\,N)&\rightarrow& I(\mathcal{O},\,N)/P,\\
 \left[Q\right]& \mapsto &[[\omega_Q,\,1]].
 \end{array}
\end{equation*}
Furthermore, we note by Remark \ref{full} (ii) that $P$ is a subgroup of $P(\mathcal{O},\,N)$.
Therefore $\Gamma$ induces a form class group of discriminant $D_\mathcal{O}$ and level $N$. 
\begin{figure}[H]
\begin{equation*}
\xymatrixcolsep{7pc}
\xymatrix{
\mathcal{C}_{\Gamma_1(N)}(D_\mathcal{O},\,N) \ar@{->}[r]^{\sim}_{\phi}
\ar@{->>}[dd]^{\textrm{the natural surjection}}
 & \mathrm{Gal}(K_{\mathcal{O},\,N}/K) \ar@{->>}[dd]^{\textrm{the restriction homomorphism}} 
\ar@{->}[r]^{\sim}_{\rho\circ\phi^{-1}} & I(\mathcal{O},\,N)/P_{1,\,N}(\mathcal{O},\,N) \ar@{->>}[dd]^{\textrm{the natural homomorphism}}
\\\\ 
 \mathcal{C}_\Gamma(D_\mathcal{O},\,N) \ar@{->}[r]^{\textrm{bijective}}_{\psi}
 & \mathrm{Gal}(L/K) \ar@{->}[r]^{\sim} & I(\mathcal{O},\,N)/P
 }
\end{equation*}
\caption{A commutative diagram of class groups and Galois groups}
\label{Figure4}
\end{figure}

\end{proof}

For a subgroup $H$ of $\mathrm{SL}_2(\mathbb{Z})$ containing $\Gamma_1(N)$, let 
$W_{D_\mathcal{O},\,N,\,H}$ be the subgroup of $\mathrm{GL}_2(\widehat{\mathbb{Z}})$ defined by
\begin{equation*}
W_{D_\mathcal{O},\,N,\,H}=\left\langle
(\gamma_p)_p\in\prod_p\mathrm{GL}_2(\mathbb{Z}_p)~\Bigg|~
\begin{array}{l}\textrm{there are $ax^2+bxy+cy^2\in\mathcal{Q}(D_\mathcal{O},\,N)$ and $\gamma\in H$}\\
\textrm{so that}~\gamma_p\equiv\begin{bmatrix}1&0\\0&a\end{bmatrix}\gamma\Mod{NM_2(\mathbb{Z}_p)}~
\textrm{for all primes $p$}
\end{array}
\right\rangle.
\end{equation*}

\begin{corollary}
Let $N$ be a positive integer and $H$ be a subgroup of $\mathrm{SL}_2(\mathbb{Z})$ containing $\Gamma_1(N)$. 
Then, the congruence subgroup
$\mathbb{Q}^\times W_{D_\mathcal{O},\,N,\,H}\cap\mathrm{SL}_2(\mathbb{Z})$
induces a form class group of discriminant $D_\mathcal{O}$
and level $N$. 
\end{corollary}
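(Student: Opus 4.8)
The plan is to apply Theorem \ref{S} to $\Gamma=\mathbb{Q}^\times W_{D_\mathcal{O},\,N,\,H}\cap\mathrm{SL}_2(\mathbb{Z})$ with the explicit choice $S=\mathbb{Q}^\times W_{D_\mathcal{O},\,N,\,H}$. Write $W:=W_{D_\mathcal{O},\,N,\,H}$. First I would record that $W$ is open in $\mathrm{GL}_2(\widehat{\mathbb{Z}})$: taking the principal form $Q_\mathcal{O}$ (so $a=1$) together with $\gamma=I_2\in\Gamma_1(N)\subseteq H$ shows that $W$ contains the kernel of the reduction $\mathrm{GL}_2(\widehat{\mathbb{Z}})\to\mathrm{GL}_2(\mathbb{Z}/N\mathbb{Z})$. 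Hence $W$ is open, therefore closed and compact, so $S$ is an open subgroup of $\mathrm{GL}_2(\widehat{\mathbb{Q}})$ containing $\mathbb{Q}^\times$ with $S/\mathbb{Q}^\times$ (a continuous image of the compact $W$) compact. The same generator shows $\Gamma_1(N)\subseteq W$, so $\Gamma\supseteq\Gamma_1(N)$; note also $\Gamma=S\cap\mathrm{SL}_2(\mathbb{Z})$.

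Next I would verify hypothesis (i), namely $\Gamma_S/\mathbb{Q}^\times\simeq\langle\Gamma,\,-I_2\rangle/\langle-I_2\rangle$. Since $\mathrm{SL}_2(\mathbb{Z})\subseteq\mathrm{GL}_2^+(\mathbb{Q})$ we have $\Gamma\subseteq\Gamma_S$, giving one inclusion. For the reverse, let $\beta\in\Gamma_S=S\cap\mathrm{GL}_2^+(\mathbb{Q})$ and write $\beta=rw$ with $r\in\mathbb{Q}^\times$ and $w\in W\subseteq\mathrm{GL}_2(\widehat{\mathbb{Z}})$. Then $w=r^{-1}\beta$ has rational entries and lies in $M_2(\widehat{\mathbb{Z}})$, so $w\in M_2(\mathbb{Z})$; moreover $\det w\in\mathbb{Q}^\times\cap\widehat{\mathbb{Z}}^\times=\{\pm1\}$, and $\det\beta>0$ forces $\det w=1$, whence $w\in W\cap\mathrm{SL}_2(\mathbb{Z})\subseteq\Gamma$ and $\beta\equiv w\pmod{\mathbb{Q}^\times}$. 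Thus the natural map $\Gamma\to\Gamma_S/\mathbb{Q}^\times$ is surjective with kernel $\Gamma\cap\mathbb{Q}^\times=\Gamma\cap\langle-I_2\rangle$, which yields the required isomorphism.

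For hypotheses (ii) and (iii) I would first compute $\det(W)$. Every generator $(\gamma_p)_p$ satisfies $\det\gamma_p\equiv a\pmod N$ with $a+N\mathbb{Z}$ in the set $G_0=\{a+N\mathbb{Z}\mid ax^2+bxy+cy^2\in\mathcal{Q}(D_\mathcal{O},\,N)\}$, and $W$ contains the reduction kernel; since $G_0$ is a subgroup of $(\mathbb{Z}/N\mathbb{Z})^\times$ by Remark \ref{cyclotomic}(i), $\det(W)$ is exactly the preimage of $G_0$ under $\widehat{\mathbb{Z}}^\times\to(\mathbb{Z}/N\mathbb{Z})^\times$. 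As $\det(S)=(\mathbb{Q}^\times)^2\det(W)$ and the diagonal rationals act trivially under the Artin map, $k_S$ is the fixed field of $\{[s^{-1},\,\mathbb{Q}]\mid s\in\det(W)\}$; since $\det(W)$ contains the reduction kernel we get $k_S\subseteq\mathbb{Q}(\zeta_N)$, and on $\mathbb{Q}(\zeta_N)$ the group $\det(W)$ acts as $G_0$ (using $G_0=G_0^{-1}$), so $k_S$ equals the fixed field $k_{D_\mathcal{O},\,N}$ of $G_0$, which is (ii). Finally, for (iii), if $\left[\begin{smallmatrix}1&0\\0&s\end{smallmatrix}\right]\in\Delta_S$ then $[s^{-1},\,\mathbb{Q}]$ fixes $k_S$, so its restriction to $\mathbb{Q}(\zeta_N)$ lies in $\mathrm{Gal}(\mathbb{Q}(\zeta_N)/k_{D_\mathcal{O},\,N})=G_0$, i.e. $s\bmod N\in G_0$; choosing $\gamma=I_2$ and a form whose leading coefficient is $\equiv s\pmod N$ exhibits $\left[\begin{smallmatrix}1&0\\0&s\end{smallmatrix}\right]$ as a generator of $W\subseteq S$. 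With (i)--(iii) in hand, Theorem \ref{S} gives the claim. The main obstacle is the bookkeeping in the $k_S$ computation: one must pin down, through the reciprocity normalization used for $[\,\cdot\,,\,\mathbb{Q}]$ and $\Delta_S$, that $\det(W)$ acts on $\mathbb{Q}(\zeta_N)$ precisely as the subgroup $G_0$ whose fixed field is $k_{D_\mathcal{O},\,N}$; the remaining steps are routine.
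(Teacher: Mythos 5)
Your proposal is correct and takes essentially the same route as the paper: set $S=\mathbb{Q}^\times W_{D_\mathcal{O},\,N,\,H}$, check hypotheses (i)--(iii) of Theorem \ref{S} for $\Gamma=S\cap\mathrm{SL}_2(\mathbb{Z})$, and conclude. The paper's own proof simply asserts that $\Gamma_S/\mathbb{Q}^\times\simeq\Gamma/\langle-I_2\rangle$, $k_S=k_{D_\mathcal{O},\,N}$ and $S\supseteq\Delta_S$ without argument, whereas you supply the verifications (openness and compactness via the principal form generator, the computation of $\Gamma_S$ using $\mathbb{Q}\cap\widehat{\mathbb{Z}}=\mathbb{Z}$ and $\mathbb{Q}^\times\cap\widehat{\mathbb{Z}}^\times=\{\pm1\}$, the determinant computation $\det(W)$ as the preimage of $G_0$, and the $\Delta_S$ argument), all of which are sound.
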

\begin{proof}
Set
$S=\mathbb{Q}^\times W_{D_\mathcal{O},\,N,\,H}$ which 
is an open subgroup of $\mathrm{GL}_2(\widehat{\mathbb{Q}})$ containing $\mathbb{Q}^\times$ such that
$S/\mathbb{Q}^\times$ is compact. If we let $\Gamma=\mathbb{Q}^\times W_{D_\mathcal{O},\,N,\,H}\cap\mathrm{SL}_2(\mathbb{Z})$, then we see that
\begin{equation*}
\Gamma_S/\mathbb{Q}^\times\simeq\Gamma/\langle-I_2\rangle,\quad
k_S=k_{D_\mathcal{O},\,N}\quad\textrm{and}\quad S\supseteq\Delta_S. 
\end{equation*}
Therefore we conclude by Theorem \ref{S} that 
$\Gamma$ induces a form class group of discriminant $D_\mathcal{O}$ and level $N$. 
\end{proof}

\section {Canonical models associated with form class groups}

In this section, we shall show that
under certain assumptions
the converse of Theorem \ref{S} does hold. 
\par
Each congruence subgroup $\Gamma$ of $\mathrm{SL}_2(\mathbb{Z})$ acts on 
the set $\mathcal{Q}(D_\mathcal{O},\,1)$ in a natural way\,:
\begin{equation*}
Q^\gamma=Q\left(\gamma\begin{bmatrix}x\\y\end{bmatrix}\right)\quad(Q\in\mathcal{Q}(D_\mathcal{O},\,N),\,
\gamma\in\Gamma). 
\end{equation*}
However, it does not necessarily act on $\mathcal{Q}(D_\mathcal{O},\,N)$, that is, 
$Q^\gamma$ may not belong to $\mathcal{Q}(D_\mathcal{O},\,N)$ for some $Q\in\mathcal{Q}(D_\mathcal{O},\,N)$
and $\gamma\in\Gamma$. Note that 
when $\Gamma$ acts on $\mathcal{Q}(D_\mathcal{O},\,N)$, the equivalence relation 
$\sim_\Gamma$ on $\mathcal{Q}(D_\mathcal{O},\,N)$ is just the one induced from this action. 
For $\gamma=\begin{bmatrix}q&r\\s&t\end{bmatrix}\in\mathrm{SL}_2(\mathbb{Z})$ and $\tau\in\mathbb{H}$, we let
$j(\gamma,\tau)=s\tau+t$. 

\begin{lemma}\label{set}
Let $\Gamma$ be a subgroup of $\mathrm{SL}_2(\mathbb{Z})$ containing 
$\langle\Gamma_1(N),\,-I_2\rangle$.  Assume that $\Gamma$ 
acts on $\mathcal{Q}(D_\mathcal{O},\,N)$ and
induces a form class group of discriminant $D_\mathcal{O}$ and level $N$.
We further assume that $D_\mathcal{O}$ is different from $-3$ and $-4$. 
\begin{enumerate}
\item[\textup{(i)}] 
If $\alpha\in\mathrm{SL}_2(\mathbb{Z})$ satisfies 
\begin{equation}\label{**}
\alpha\equiv\begin{bmatrix}\mathrm{*}&\mathrm{*}\\
as&t\end{bmatrix}\Mod{NM_2(\mathbb{Z})}
\end{equation}
for some $ax^2+bxy+cy^2\in\mathcal{Q}(D_\mathcal{O},\,N)$ and 
$\begin{bmatrix}q&r\\s&t\end{bmatrix}\in\Gamma$,
then $\alpha$ belongs to $\Gamma$. 
\item[\textup{(ii)}] The group $W_{D_\mathcal{O},\,N,\,\Gamma}$ coincides with the set
\begin{equation*}
\widetilde{W}_{D_\mathcal{O},\,N,\,\Gamma}=\left\{
(\gamma_p)_p\in\prod_p\mathrm{GL}_2(\mathbb{Z}_p)~\Bigg|~
\begin{array}{l}\textrm{there are $ax^2+bxy+cy^2\in\mathcal{Q}(D_\mathcal{O},\,N)$ and $\gamma\in\Gamma$}\\
\textrm{so that}~\gamma_p\equiv\begin{bmatrix}1&0\\0&a\end{bmatrix}\gamma\Mod{NM_2(\mathbb{Z}_p)}~
\textrm{for all primes $p$}
\end{array}
\right\}. 
\end{equation*}
\end{enumerate}
\end{lemma}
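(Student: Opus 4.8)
The plan is to establish (i) first and then read off (ii) as a short computation, since (ii) amounts to the closure of $\widetilde{W}_{D_\mathcal{O},\,N,\,\Gamma}$ under multiplication and the only substantial input is (i). Write $A=\{a+N\mathbb{Z}~|~ax^2+bxy+cy^2\in\mathcal{Q}(D_\mathcal{O},\,N)\}$, which is a subgroup of $(\mathbb{Z}/N\mathbb{Z})^\times$ by Remark \ref{cyclotomic}(i), and let $\overline{\Gamma}$ denote the image of $\Gamma$ in $\mathrm{SL}_2(\mathbb{Z}/N\mathbb{Z})$. For (i) I would first reduce to a statement about bottom rows modulo $N$. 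Left multiplication by $\Gamma_1(N)$ leaves the bottom row of a matrix in $\mathrm{SL}_2(\mathbb{Z})$ unchanged modulo $N$, and if two matrices $\alpha,\,\beta\in\mathrm{SL}_2(\mathbb{Z})$ have the same bottom row modulo $N$ then $\alpha\beta^{-1}$ has bottom row $\equiv(0,\,1)\pmod{N}$ and determinant $1$, hence lies in $\Gamma_1(N)$. Since $\Gamma\supseteq\langle\Gamma_1(N),\,-I_2\rangle$, it therefore suffices to exhibit a single $\beta\in\Gamma$ with bottom row $\equiv(as,\,t)\pmod{N}$; then $\alpha\beta^{-1}\in\Gamma_1(N)\subseteq\Gamma$ forces $\alpha\in\Gamma$. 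Equivalently, I must show that $(as,\,t)$ occurs as the bottom row of some element of $\overline{\Gamma}$, given that $(s,\,t)$ does (it is the bottom row of $\gamma_0$) and that $a\in A$.

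To realize this bottom row inside $\overline{\Gamma}$ I would use both standing hypotheses at once: that $\Gamma$ acts on $\mathcal{Q}(D_\mathcal{O},\,N)$ and that it induces a form class group, so that by Definition \ref{induce} the orbit map $[Q]\mapsto[[\omega_Q,\,1]]$ identifies $\mathcal{C}_\Gamma(D_\mathcal{O},\,N)$ with $I(\mathcal{O},\,N)/P$. The bridge between matrices and forms is the relation $\omega_{Q^\gamma}=\gamma^{-1}(\omega_Q)$, which lets me recover a matrix (modulo its stabilizer) from a pair consisting of a base form and its translate. Here the excluded discriminants are essential: since $D_\mathcal{O}\neq-3,\,-4$ we have $\mathcal{O}^\times=\{\pm1\}$, so the only automorphs of a primitive form in $\mathrm{SL}_2(\mathbb{Z})$ are $\pm I_2\in\Gamma$, and $\Gamma$ acts on each orbit simply transitively modulo $\pm I_2$. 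This triviality of stabilizers is what upgrades an equality of form-orbits to an equality of matrices modulo $\Gamma$. The strategy is then to pick a form $Q_1\in\mathcal{Q}(D_\mathcal{O},\,N)$, transport the datum ``$a$ is the leading coefficient of $Q_0=ax^2+bxy+cy^2$'' through the isomorphisms $\rho_{D_\mathcal{O},\,N}$ and $\phi_{D_\mathcal{O},\,N}$ of Propositions \ref{formclassgroup} and \ref{Galois}, and identify the matrix with bottom row $(as,\,t)$ as the unique element of $\Gamma$ (up to $\pm I_2$) carrying $Q_1$ into its $\gamma_0$-translate. Converting this form-level equality into the precise bottom-row congruence is, I expect, the main obstacle of the entire lemma.

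Granting (i), part (ii) is short. Only the inclusion $\widetilde{W}_{D_\mathcal{O},\,N,\,\Gamma}\subseteq W_{D_\mathcal{O},\,N,\,\Gamma}$ requires proof, and since both sets are cut out by congruences modulo $N$ (the condition at primes $p\nmid N$ being vacuous as $N\in\mathbb{Z}_p^\times$), it is enough to show that the image of $\widetilde{W}_{D_\mathcal{O},\,N,\,\Gamma}$ in $\mathrm{GL}_2(\mathbb{Z}/N\mathbb{Z})$, namely the set $\{\left[\begin{smallmatrix}1&0\\0&a\end{smallmatrix}\right]\gamma~|~a\in A,\,\gamma\in\overline{\Gamma}\}$, is closed under multiplication; being finite and nonempty it is then a subgroup, hence coincides with the group it generates, which is the image of $W_{D_\mathcal{O},\,N,\,\Gamma}$. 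Given two such generators $\left[\begin{smallmatrix}1&0\\0&a_1\end{smallmatrix}\right]\gamma_1$ and $\left[\begin{smallmatrix}1&0\\0&a_2\end{smallmatrix}\right]\gamma_2$, their product has determinant $a_1a_2\in A$, and factoring out $\left[\begin{smallmatrix}1&0\\0&a_1a_2\end{smallmatrix}\right]$ reduces closure to the claim that the conjugate
\begin{equation*}
\begin{bmatrix}1&0\\0&\check{a}_2\end{bmatrix}\begin{bmatrix}q&r\\s&t\end{bmatrix}\begin{bmatrix}1&0\\0&a_2\end{bmatrix}=\begin{bmatrix}q&ra_2\\\check{a}_2s&t\end{bmatrix},\qquad\gamma_1=\begin{bmatrix}q&r\\s&t\end{bmatrix}\in\overline{\Gamma},
\end{equation*}
lies in $\overline{\Gamma}$. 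Its bottom row is $(\check{a}_2s,\,t)$, and $\check{a}_2+N\mathbb{Z}\in A$ since $A$ is a subgroup, so $\check{a}_2$ is the reduction of a leading coefficient of some form in $\mathcal{Q}(D_\mathcal{O},\,N)$; applying (i) to any $\mathrm{SL}_2(\mathbb{Z})$-lift of this matrix (with $\gamma_0=\gamma_1$) shows it belongs to $\Gamma$, hence its reduction to $\overline{\Gamma}$. Thus the diagonals $\left[\begin{smallmatrix}1&0\\0&a\end{smallmatrix}\right]$ with $a\in A$ normalize $\overline{\Gamma}$, the image of $\widetilde{W}_{D_\mathcal{O},\,N,\,\Gamma}$ is a subgroup, and (ii) follows.
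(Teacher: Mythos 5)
Your part (i) is where the proposal breaks down, and you say so yourself. The opening reduction is correct: two matrices of $\mathrm{SL}_2(\mathbb{Z})$ with the same bottom row modulo $N$ differ by left multiplication by an element of $\Gamma_1(N)\subseteq\Gamma$ (the determinant condition forces the product $\alpha\beta^{-1}$ to be $\equiv\left[\begin{smallmatrix}1&\ast\\0&1\end{smallmatrix}\right]\Mod{N}$), so it suffices to produce one $\beta\in\Gamma$ with bottom row $\equiv(as,\,t)\Mod{N}$. But this merely restates (i); and for the existence of such $\beta$ you offer only a strategy (``transport the datum through $\rho_{D_\mathcal{O},\,N}$ and $\phi_{D_\mathcal{O},\,N}$\,'') and then concede that ``converting this form-level equality into the precise bottom-row congruence'' is the unresolved main obstacle. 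That conversion \emph{is} the lemma. The ingredients you name --- the bijection of Definition \ref{induce}, the relation $\omega_{Q^\gamma}=\gamma^{-1}(\omega_Q)$, trivial automorphs when $D_\mathcal{O}\neq-3,\,-4$ --- are exactly the paper's, but they do not by themselves connect congruences on matrix entries with identities of form classes. The paper's bridge is ideal-theoretic and hinges on a specific auxiliary form: writing $\phi:\mathcal{C}_\Gamma(D_\mathcal{O},\,N)\rightarrow I(\mathcal{O},\,N)/P$ for the bijection, one applies $\phi$ to $[Q]=[Q^{\gamma^{-1}}]$ and uses $[\gamma(\omega_Q),\,1]=\frac{1}{j(\gamma,\,\omega_Q)}[\omega_Q,\,1]$ to get $j(\gamma,\,\omega_Q)\mathcal{O}\in P$; one then passes to $Q'=x^2+bxy+\frac{b^2-D_\mathcal{O}}{4}y^2$, whose crucial feature is that $\omega_{Q'}=a\omega_Q$ lies \emph{in} $\mathcal{O}$, so that the entrywise hypothesis (\ref{**}) becomes a congruence $j(\alpha,\,\omega_{Q'})\equiv(as)\omega_{Q'}+t\Mod{N\mathcal{O}}$ of elements of $\mathcal{O}$; and finally, because $P\supseteq P_{1,\,N}(\mathcal{O},\,N)$, classes of principal ideals modulo $P$ depend only on a generator modulo $N\mathcal{O}$, which upgrades these congruences to the membership $j(\alpha,\,\omega_{Q'})\mathcal{O}\in P$, hence to $\phi([Q'^{\alpha^{-1}}])=[[\alpha(\omega_{Q'}),\,1]]=[[\omega_{Q'},\,1]]=\phi([Q'])$. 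Injectivity of $\phi$ then gives $Q'^{\alpha^{-1}}=Q'^{\beta}$ for some $\beta\in\Gamma$, and the trivial isotropy group $\{\pm I_2\}$ of $Q'$ (this is where $D_\mathcal{O}\neq-3,\,-4$ and $-I_2\in\Gamma$ enter, as you anticipated) yields $\alpha\in\Gamma$. None of this mechanism --- the choice of the form with leading coefficient $1$, nor the use of $P\supseteq P_{1,\,N}(\mathcal{O},\,N)$ to turn congruences mod $N\mathcal{O}$ into ideal-class identities --- appears in your proposal, so (i) is not proved.

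Your part (ii) would be fine if (i) were available, and it is essentially the paper's argument in different packaging: the conjugation identity $\left[\begin{smallmatrix}1&0\\0&\check{a}_2\end{smallmatrix}\right]\left[\begin{smallmatrix}q&r\\s&t\end{smallmatrix}\right]\left[\begin{smallmatrix}1&0\\0&a_2\end{smallmatrix}\right]\equiv\left[\begin{smallmatrix}q&ra_2\\\check{a}_2s&t\end{smallmatrix}\right]\Mod{N}$, the fact from Remark \ref{cyclotomic} (i) that the leading coefficients form a subgroup of $(\mathbb{Z}/N\mathbb{Z})^\times$, surjectivity of $\mathrm{SL}_2(\mathbb{Z})\rightarrow\mathrm{SL}_2(\mathbb{Z}/N\mathbb{Z})$, and an appeal to (i); working entirely modulo $N$ (the conditions at $p\nmid N$ being vacuous) is a legitimate simplification of the paper's componentwise computation. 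One slip: you assert that only $\widetilde{W}_{D_\mathcal{O},\,N,\,\Gamma}\subseteq W_{D_\mathcal{O},\,N,\,\Gamma}$ needs proof, but that inclusion is the trivial one, since $\widetilde{W}_{D_\mathcal{O},\,N,\,\Gamma}$ is the generating set of $W_{D_\mathcal{O},\,N,\,\Gamma}$; it is the reverse inclusion that requires the argument. Your actual argument (the image of $\widetilde{W}_{D_\mathcal{O},\,N,\,\Gamma}$ modulo $N$ is closed under multiplication, hence a subgroup, hence equals the image of the group it generates) does deliver the equality, so this is a mislabeling rather than an error --- but since (ii) quotes (i), the lemma as a whole remains unproven.
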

\begin{proof}
\begin{enumerate}
\item[(i)] Since $\Gamma$ induces a form class group of discriminant $D_\mathcal{O}$ and level $N$,
there is a subgroup $P$ of $P(\mathcal{O},\,N)$ containing $P_{1,\,N}(\mathcal{O},\,N)$ so that
the mapping
\begin{equation*}
\begin{array}{rccc}
\phi:&\mathcal{C}_\Gamma(D_\mathcal{O},\,N)&\rightarrow& I(\mathcal{O},\,N)/P,\\
&[[Q]]&\mapsto&[[\omega_Q,\,1]]
\end{array}
\end{equation*}
is a well-defined bijection. 
Now, let $\alpha$ be an element of $\mathrm{SL}_2(\mathbb{Z})$ satisfying
(\ref{**}) with $Q=ax^2+bxy+cy^2\in\mathcal{Q}(D_\mathcal{O},\,N)$ and
$\gamma=\begin{bmatrix}q&r\\s&t\end{bmatrix}\in\Gamma$. 
Then we see that in $I(\mathcal{O},\,N)/P$
\begin{equation}\label{wppw}
[[\omega_Q,\,1]]=\phi([Q])=\phi([Q^{\gamma^{-1}}])=[[\omega_{Q^{\gamma^{-1}}},\,1]]=[[\gamma(\omega_Q),\,1]]=
\left[\frac{1}{j(\gamma,\,\omega_Q)}\mathcal{O}[\omega_Q,\,1]\right].
\end{equation}
On the other hand, we find that in $I(\mathcal{O},\,N)/P$
\begin{eqnarray*}
[j(\gamma,\,\omega_Q)\mathcal{O}]&=&[(s\omega_Q+t)\mathcal{O}]\\
&=&[((as)\omega_{Q'}+t)\mathcal{O}]
\quad\textrm{where}~Q'=x^2+bxy+\frac{b^2-D_\mathcal{O}}{4}y^2~(\in\mathcal{Q}(D_\mathcal{O},\,N))\\
&=&[j(\alpha,\,\omega_{Q'})\mathcal{O}]\quad\textrm{since
$\omega_{Q'}\in\mathcal{O}$, 
$(as)\omega_{Q'}+t\equiv j(\alpha,\,\omega_{Q'})\Mod{N\mathcal{O}}$}\\
&&\hspace{2.5cm}\textrm{and $P$ contains $P_{1,\,N}(\mathcal{O})$},
\end{eqnarray*}
which yields by (\ref{wppw}) that $j(\alpha,\,\omega_{Q'})\mathcal{O}\in P$. 
And, we derive that
\begin{equation*}
\phi([Q'])=[[\omega_{Q'},\,1]]
=\left[
\frac{1}{j(\alpha,\,\omega_{Q'})}\mathcal{O}[\omega_{Q'},\,1]\right]
=[[\alpha(\omega_{Q'}),\,1]]=[[\omega_{Q'^{\alpha^{-1}}},\,1]]=\phi([Q'^{\alpha^{-1}}]),
\end{equation*}
where $Q'^{\alpha^{-1}}$ is understood as the action of $\alpha^{-1}$
on the element $Q'$ of $\mathcal{Q}(D_\mathcal{O},\,1)$. 
Thus it follows that $[Q'^{\alpha^{-1}}]=[Q']$ in $\mathcal{C}_\Gamma(D_\mathcal{O},\,N)$ because $\phi$ is injective,
and hence $Q'^{\alpha^{-1}}=Q'^\beta$ for some $\beta\in\Gamma$. 
So, we attain that $\beta\alpha$ belongs to the isotropy subgroup of $Q'$ in $\mathrm{SL}_2(\mathbb{Z})$, namely, 
$\{I_2,\,-I_2\}$ because $D_\mathcal{O}\neq-3,\,-4$ (cf. \cite[Proposition 1.5 (c)]{Silverman94}).
Therefore $\alpha$ belongs to $\Gamma$. 
\item[(ii)]
The inclusion $\widetilde{W}_{D_\mathcal{O},\,N,\,\Gamma}\subseteq
W_{D_\mathcal{O},\,N,\,\Gamma}$ is obvious. 
\par
For the converse inclusion, it suffices to show that
$\widetilde{W}_{D_\mathcal{O},\,N,\,\Gamma}$ is a subgroup of 
$\mathrm{GL}_2(\widehat{\mathbb{Z}})$. Observe
first that the identity element of $\mathrm{GL}_2(\widehat{\mathbb{Z}})$ belongs to 
$\widetilde{W}_{D_\mathcal{O},\,N,\,\Gamma}$ since
$Q_\mathcal{O}=x^2+b_\mathcal{O}xy+c_\mathcal{O}y^2\in\mathcal{Q}(D_\mathcal{O},\,N)$ 
and $I_2\in\Gamma$. 
Now, let $(\alpha_p)_p,\,(\beta_p)_p\in \widetilde{W}_{D_\mathcal{O},\,N,\,\Gamma}$. 
Then we get that for every prime $p$
\begin{equation*}
\alpha_p\equiv\begin{bmatrix}1&0\\0&a\end{bmatrix}\gamma\Mod{NM_2(\mathbb{Z}_p)}
\quad\textrm{and}\quad
\beta_p\equiv\begin{bmatrix}1&0\\0&a'\end{bmatrix}\gamma'\Mod{NM_2(\mathbb{Z}_p)}
\end{equation*}
for some $Q=ax^2+bxy+cy^2,\,Q'=a'x^2+b'xy+c'y^2\in\mathcal{Q}(D_\mathcal{O},\,N)$ and $\gamma,\,\gamma'\in\Gamma$. 
And, we achieve that
\begin{eqnarray*}
\beta_p\alpha_p^{-1}&\equiv&\begin{bmatrix}
1&0\\0&a'\end{bmatrix}\gamma''\begin{bmatrix}1&0\\0&\check{a}\end{bmatrix}
\Mod{NM_2(\mathbb{Z}_p)}\quad\textrm{where}~\gamma''
=\begin{bmatrix}q&r\\s&t\end{bmatrix}=\gamma'\gamma^{-1}~(\in\Gamma)\\
&&\hspace{4.5cm}\textrm{and $\check{a}$ is any integer such that $a\check{a}\equiv1\Mod{N}$}\\
&\equiv&\begin{bmatrix}1&0\\0&a'\check{a}\end{bmatrix}\begin{bmatrix}q&\check{a}r\\as&t
\end{bmatrix}\Mod{NM_2(\mathbb{Z}_p)}\\
&\equiv&\begin{bmatrix}1&0\\0&a''\end{bmatrix}\gamma'''\Mod{NM_2(\mathbb{Z}_p)}\\
&&\textrm{for some $a''x^2+b''xy+c''y^2\in\mathcal{Q}(D_\mathcal{O},\,N)$ and $\gamma'''\in\Gamma$
by Remark \ref{cyclotomic}, (i) and}\\
&&\textrm{the surjectivity of the reduction $\mathrm{SL}_2(\mathbb{Z})\rightarrow
\mathrm{SL}_2(\mathbb{Z}/N\mathbb{Z})$ (cf. \cite[Lemma 1.38]{Shimura})}.
\end{eqnarray*}
This observation implies that $(\beta_p)_p^{-1}(\alpha_p)_p\in \widetilde{W}_{D_\mathcal{O},\,N,\,\Gamma}$,
and hence $\widetilde{W}_{D_\mathcal{O},\,N,\,\Gamma}$ is a subgroup of $\mathrm{GL}_2(\widehat{\mathbb{Z}})$.
\end{enumerate}
\end{proof}

\begin{theorem}\label{main}
Let $\Gamma $ be a subgroup of $\mathrm{SL}_2(\mathbb{Z})$ containing $\Gamma_1(N)$.
Let $K$ be an imaginary quadratic field and $\mathcal{O}$ be an order of discriminant $D_\mathcal{O}$ in $K$. 
Assume that
\begin{enumerate}
\item[\textup{(a)}] $\Gamma$ acts on $\mathcal{Q}(D_\mathcal{O},\,N)$,
\item[\textup{(b)}] $D_\mathcal{O}$ is different from $-3$ and $-4$.
\end{enumerate}
Then, $\Gamma$ induces a form class group of discriminant $D_\mathcal{O}$ and level $N$ if and only if 
there is an open subgroup $S$ of $\mathrm{GL}_2(\widehat{\mathbb{Q}})$ containing $\mathbb{Q}^\times$ such that
$S/\mathbb{Q}^\times$ is compact and
\begin{enumerate}
\item[\textup{(i)}] $\Gamma_S/\mathbb{Q}^\times\simeq\langle\Gamma,\,-I_2\rangle/\langle-I_2\rangle$,
\item[\textup{(ii)}] $k_S$ is a subfield of $k_{D_\mathcal{O},\,N}=\mathbb{Q}(\zeta_N)\cap K$,
\item[\textup{(iii)}] $S$ contains $\Delta_S$.
\end{enumerate}
\end{theorem}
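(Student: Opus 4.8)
The backward implication is already contained in Theorem \ref{S}, whose proof produces a form class group from conditions (i)--(iii) without ever invoking (a) or (b); so only the forward implication needs work, and here the natural candidate is $S=\mathbb{Q}^\times W_{D_\mathcal{O},\,N,\,\Gamma}$. First I would reduce to the case $-I_2\in\Gamma$ by replacing $\Gamma$ with $\langle\Gamma,\,-I_2\rangle$: since $-I_2$ acts trivially on every quadratic form, this leaves $\mathcal{Q}(D_\mathcal{O},\,N)$, the relation $\sim_\Gamma$, and hence the induced form class group unchanged, preserves assumption (a), and does not alter conditions (i)--(iii) (which refer only to $\langle\Gamma,\,-I_2\rangle$). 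With $-I_2\in\Gamma$, set $S=\mathbb{Q}^\times W_{D_\mathcal{O},\,N,\,\Gamma}$. Because $\Gamma\supseteq\Gamma_1(N)$, the group $W_{D_\mathcal{O},\,N,\,\Gamma}$ contains every $(\gamma_p)_p$ with $\gamma_p\equiv I_2\Mod{N}$ (take the principal form and $\gamma=I_2$), so it is open and compact in $\mathrm{GL}_2(\widehat{\mathbb{Z}})$; hence $S$ is open in $\mathrm{GL}_2(\widehat{\mathbb{Q}})$, contains $\mathbb{Q}^\times$, and has $S/\mathbb{Q}^\times$ compact. Since (a), (b) hold and $\Gamma$ induces a form class group, Lemma \ref{set}(ii) supplies the crucial set-theoretic description $W_{D_\mathcal{O},\,N,\,\Gamma}=\widetilde{W}_{D_\mathcal{O},\,N,\,\Gamma}$, on which everything else rests.

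For condition (i), I would compute $\Gamma_S=S\cap\mathrm{GL}_2^+(\mathbb{Q})$ directly. If $\delta=\lambda w\in\mathrm{GL}_2^+(\mathbb{Q})$ with $\lambda\in\mathbb{Q}^\times$ and $w\in W_{D_\mathcal{O},\,N,\,\Gamma}\subseteq\mathrm{GL}_2(\widehat{\mathbb{Z}})$, then $w=\lambda^{-1}\delta\in\mathrm{GL}_2(\mathbb{Q})\cap\mathrm{GL}_2(\widehat{\mathbb{Z}})=\mathrm{GL}_2(\mathbb{Z})$, and $\det\delta>0$ forces $\det w=1$; thus $\Gamma_S=\mathbb{Q}^\times\bigl(\mathrm{SL}_2(\mathbb{Z})\cap W_{D_\mathcal{O},\,N,\,\Gamma}\bigr)$ and $\Gamma_S/\mathbb{Q}^\times\simeq(\mathrm{SL}_2(\mathbb{Z})\cap W_{D_\mathcal{O},\,N,\,\Gamma})/\langle-I_2\rangle$. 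It then suffices to show $\mathrm{SL}_2(\mathbb{Z})\cap W_{D_\mathcal{O},\,N,\,\Gamma}=\Gamma$. For $g$ in this intersection, the identity $W=\widetilde{W}$ gives a \emph{single} form with leading coefficient $a$ and a \emph{single} $\gamma\in\Gamma$ with $g\equiv\left[\begin{smallmatrix}1&0\\0&a\end{smallmatrix}\right]\gamma\Mod{N}$; comparing determinants modulo $N$ yields $1\equiv a$, so $g\equiv\gamma\Mod{N}$, whence $g\gamma^{-1}\in\Gamma_1(N)\subseteq\Gamma$ and $g\in\Gamma$. The reverse inclusion is immediate from the principal form together with $-I_2\in\Gamma$. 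This is exactly condition (i).

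For (ii) and (iii) I would first identify $\det W_{D_\mathcal{O},\,N,\,\Gamma}=\pi_N^{-1}(A)$, where $\pi_N:\widehat{\mathbb{Z}}^\times\to(\mathbb{Z}/N\mathbb{Z})^\times$ is reduction and $A=\{a+N\mathbb{Z}\mid ax^2+bxy+cy^2\in\mathcal{Q}(D_\mathcal{O},\,N)\}$ is the subgroup of Remark \ref{cyclotomic}(i): the inclusion $\subseteq$ is the determinant of a member of $\widetilde{W}$, and $\supseteq$ follows by realizing any $t$ with $\pi_N(t)\in A$ as $\det\left[\begin{smallmatrix}1&0\\0&t\end{smallmatrix}\right]$ with $\left[\begin{smallmatrix}1&0\\0&t\end{smallmatrix}\right]\in\widetilde{W}$ (leading coefficient $\equiv t$, $\gamma=I_2$). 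Since principal ideles act trivially under the Artin map, $k_S$ is the fixed field in $\mathbb{Q}^{\mathrm{ab}}$ of $\{[t^{-1},\,\mathbb{Q}]\mid t\in\pi_N^{-1}(A)\}$; as $\pi_N^{-1}(A)\supseteq\ker\pi_N$ and $\ker\pi_N$ maps onto $\mathrm{Gal}(\mathbb{Q}^{\mathrm{ab}}/\mathbb{Q}(\zeta_N))$, this fixed field lies in $\mathbb{Q}(\zeta_N)$ and equals the fixed field of the image $A\subseteq\mathrm{Gal}(\mathbb{Q}(\zeta_N)/\mathbb{Q})\cong(\mathbb{Z}/N\mathbb{Z})^\times$, namely $k_{D_\mathcal{O},\,N}$ by definition, giving (ii) with equality. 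For (iii), an element $\left[\begin{smallmatrix}1&0\\0&s\end{smallmatrix}\right]\in\Delta_S$ has $[s^{-1},\,\mathbb{Q}]$ fixing $k_S=k_{D_\mathcal{O},\,N}$, which by Galois correspondence forces $\pi_N(s)\in A$ (stability of $A$ under inversion makes the Artin-map sign convention irrelevant); then $\left[\begin{smallmatrix}1&0\\0&s\end{smallmatrix}\right]\in\widetilde{W}=W\subseteq S$ via the form with leading coefficient $\equiv s$ and $\gamma=I_2$, so $\Delta_S\subseteq S$.

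The main obstacle is condition (i), and it concentrates entirely in Lemma \ref{set}(ii), which upgrades the \emph{generated} group $W_{D_\mathcal{O},\,N,\,\Gamma}$ to the explicit \emph{set} $\widetilde{W}_{D_\mathcal{O},\,N,\,\Gamma}$. Without this, a typical element of $W$ is a product in which the form-leading-coefficients and the $\Gamma$-parts are entangled, so the clean determinant comparison $1\equiv a\Mod{N}$ — precisely the step that recovers $\Gamma$ exactly rather than some larger congruence group — would fail. By contrast, conditions (ii) and (iii) are routine once $\det W_{D_\mathcal{O},\,N,\,\Gamma}=\pi_N^{-1}(A)$ is in hand, the only mild care being the normalization of the Artin map, which is harmless because $A$ is a subgroup.
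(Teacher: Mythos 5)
Your proposal is correct and follows essentially the same route as the paper: the backward direction is delegated to Theorem \ref{S}, and for the forward direction you take $S=\mathbb{Q}^\times W_{D_\mathcal{O},\,N,\,\Gamma}$ after reducing to the case $-I_2\in\Gamma$, with Lemma \ref{set}(ii) (i.e.\ $W_{D_\mathcal{O},\,N,\,\Gamma}=\widetilde{W}_{D_\mathcal{O},\,N,\,\Gamma}$) as the key ingredient giving $\Gamma_S=\mathbb{Q}^\times\Gamma$. Your write-up simply makes explicit the verifications that the paper's proof leaves to the reader, namely the determinant comparison showing $\mathrm{SL}_2(\mathbb{Z})\cap W_{D_\mathcal{O},\,N,\,\Gamma}=\Gamma$, the identification of $\det W_{D_\mathcal{O},\,N,\,\Gamma}$ with the preimage of the subgroup $\{a+N\mathbb{Z}\}$ of $(\mathbb{Z}/N\mathbb{Z})^\times$, and the resulting equality $k_S=k_{D_\mathcal{O},\,N}$ which yields (ii) and (iii).
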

\begin{proof}
The ``\,if\,'' part of the theorem, even without the hypotheses (a) and (b), is given in Theorem \ref{S}. 
\par
Conversely, assume that $\Gamma$ induces a form class group of discriminant $D_\mathcal{O}$ and level $N$. 
Since $\langle\Gamma,\,-I_2\rangle$ ($\subseteq\mathrm{SL}_2(\mathbb{Z})$) induces the same form class group as that induced from $\Gamma$, 
we may assume that $\Gamma$ contains $-I_2$. 
Set 
\begin{equation*}
S=\mathbb{Q}^\times W_{D_\mathcal{O},\,N,\,\Gamma}
\end{equation*}
which is an open subgroup of $\mathrm{GL}_2(\widehat{\mathbb{Q}})$ containing $\mathbb{Q}^\times$ such that
$S/\mathbb{Q}^\times$ is compact. 
Then we get from the definition of $W_{D_\mathcal{O},\,N,\,\Gamma}$ that $S$ satisfies (ii) and (iii). 
And, we attain by (a), (b) and Lemma \ref{set} that 
\begin{equation*}
\Gamma_S=S\cap\mathrm{GL}_2^+(\mathbb{Q})=\mathbb{Q}^\times\Gamma,
\end{equation*}
and so $S$ also satisfies (i). 
This completes the proof. 
\end{proof}

\section {Congruence subgroups acting on $\mathcal{Q}(D_\mathcal{O},\,N)$}

In this last section, we shall find an equivalent condition for (a) of Theorem \ref{main}. 
\par For a positive integer $n$, let $\Gamma_0(n)$ be the Hecke congruence subgroup of $\mathrm{SL}_2(\mathbb{Z})$ of level $n$,
that is,
\begin{equation*}
\Gamma_0(n)=\left\{
\gamma\in\mathrm{SL}_2(\mathbb{Z})~|~\gamma\equiv\begin{bmatrix}\mathrm{*}&\mathrm{*}\\
0&\mathrm{*}\end{bmatrix}\Mod{nM_2(\mathbb{Z})}\right\}.
\end{equation*}

\begin{lemma}\label{pr}
Let $\Gamma$ be a subgroup of $\mathrm{SL}_2(\mathbb{Z})$ containing 
$\Gamma_1(N)$, and let $M$ \textup{(}$\geq2$\textup{)} be a square-free positive integer. 
If $\Gamma$ is not contained in $\Gamma_0(M)$, then 
there is a prime factor $p$ of $M$ and $\begin{bmatrix}q&r\\s&t\end{bmatrix}\in\Gamma$ such that
$p\,|\,q$ and $p\,\nmid\,s$.
\end{lemma}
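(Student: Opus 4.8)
The plan is to produce the desired matrix by starting from any witness that $\Gamma\not\subseteq\Gamma_0(M)$ and then correcting its upper-left entry with an upper-triangular unipotent element, which is automatically available inside $\Gamma$. First I would unwind the hypothesis: since $\Gamma$ is not contained in $\Gamma_0(M)$, there is some $g=\left[\begin{smallmatrix}a&b\\c&d\end{smallmatrix}\right]\in\Gamma$ whose lower-left entry satisfies $c\not\equiv0\Mod{M}$, i.e. $M\nmid c$. Here is the one place where the square-freeness of $M$ is used: writing $M=p_1\cdots p_k$ as a product of distinct primes, the condition $M\nmid c$ forces $p\nmid c$ for at least one prime factor $p$ of $M$ (for non-square-free $M$ this implication fails, e.g. $M=4$, $c=2$, so the hypothesis genuinely cannot be weakened). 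Fix such a prime $p$.

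Next I would exploit that $\Gamma\supseteq\Gamma_1(N)$ supplies every upper unipotent matrix. Indeed, for each integer $n$ the matrix $\left[\begin{smallmatrix}1&n\\0&1\end{smallmatrix}\right]$ is congruent to $\left[\begin{smallmatrix}1&\mathrm{*}\\0&1\end{smallmatrix}\right]\Mod{NM_2(\mathbb{Z})}$, hence lies in $\Gamma_1(N)\subseteq\Gamma$. Left-multiplying the witness gives
\begin{equation*}
\begin{bmatrix}1&n\\0&1\end{bmatrix}\begin{bmatrix}a&b\\c&d\end{bmatrix}
=\begin{bmatrix}a+nc&b+nd\\c&d\end{bmatrix}\in\Gamma,
\end{equation*}
which leaves the entire bottom row, and in particular the lower-left entry $c$ with $p\nmid c$, unchanged, while shifting the upper-left entry to $a+nc$. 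Because $p\nmid c$, the residue $c$ is invertible modulo $p$, so I can choose $n\equiv-a\,\check{c}\Mod{p}$, where $c\check{c}\equiv1\Mod{p}$, to arrange $a+nc\equiv0\Mod{p}$. Setting $q=a+nc$, $r=b+nd$, $s=c$, $t=d$ then yields an element $\left[\begin{smallmatrix}q&r\\s&t\end{smallmatrix}\right]\in\Gamma$ with $p\mid q$ and $p\nmid s$, as required.

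I do not anticipate a genuine obstacle: the argument is elementary once one observes that upper unipotents are free to use and that the lower row is invariant under left multiplication by them. The only point requiring care is the bookkeeping at the prime $p$ — making sure that the correction of the upper-left entry is carried out modulo $p$ (not modulo $M$ or $N$), so that divisibility by $p$ of $q$ and non-divisibility by $p$ of $s$ hold simultaneously. The square-free hypothesis is what guarantees the existence of a single prime $p\mid M$ at which both conditions can be secured at once.
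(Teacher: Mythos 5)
Your proof is correct and follows essentially the same route as the paper's: pick a witness $\left[\begin{smallmatrix}a&b\\c&d\end{smallmatrix}\right]\in\Gamma\setminus\Gamma_0(M)$, use square-freeness of $M$ to find a prime $p\mid M$ with $p\nmid c$, and left-multiply by a power of $\left[\begin{smallmatrix}1&1\\0&1\end{smallmatrix}\right]\in\Gamma_1(N)\subseteq\Gamma$ to kill the upper-left entry modulo $p$ while leaving the bottom row untouched. The paper phrases the choice of the shift as solving the linear congruence $cx\equiv-a\Mod{p}$ rather than invoking $\check{c}$ explicitly, but this is the identical argument.
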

\begin{proof}
Since $\Gamma\not\subseteq\Gamma_0(M)$, we can 
take a matrix $\gamma=\begin{bmatrix}q_0&r_0\\s_0&t_0\end{bmatrix}$ in $\Gamma$ such that $s_0\not\equiv0
\Mod{M}$.  Moreover, since $M$ is square-free, there is  a prime factor $p$ of $M$ not dividing $s_0$. 
Then the linear congruence $s_0x\equiv-q_0\Mod{p}$ has an integer solution, say $x=k$. 
Now that $\Gamma$ contains $T=\begin{bmatrix}1&1\\0&1\end{bmatrix}$, it also has the matrix
\begin{equation*}
\begin{bmatrix}q&r\\s&t\end{bmatrix}=T^k\gamma=
\begin{bmatrix}1&k\\0&1\end{bmatrix}\begin{bmatrix}q_0&r_0\\s_0&t_0\end{bmatrix}
=\begin{bmatrix}q_0+ks_0&r_0+kt_0\\s_0&t_0\end{bmatrix}\quad
\textrm{satisfying}~p\,|\,q~\textrm{and}~p\,\nmid\,s. 
\end{equation*}
\end{proof}

\begin{lemma}\label{x^2}
If $Q=ax^2+bxy+cy^2\in\mathcal{Q}(D_\mathcal{O},\,N)$ and $\gamma=\begin{bmatrix}q&r\\s&t\end{bmatrix}\in\Gamma$, then
the coefficient of $x^2$ in $Q\left(\gamma\begin{bmatrix}x\\y\end{bmatrix}\right)$ is
$Q(q,\,s)=aq^2+bqs+cs^2$. 
\end{lemma}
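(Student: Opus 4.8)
The plan is to prove this purely by direct substitution and expansion, since the statement is a computational identity that does not really depend on the arithmetic structure of $\mathcal{Q}(D_\mathcal{O},\,N)$ or on $\Gamma$. First I would write out the action of $\gamma$ on the column vector,
\begin{equation*}
\gamma\begin{bmatrix}x\\y\end{bmatrix}=\begin{bmatrix}qx+ry\\sx+ty\end{bmatrix},
\end{equation*}
and substitute this into $Q$ to obtain
\begin{equation*}
Q\left(\gamma\begin{bmatrix}x\\y\end{bmatrix}\right)=a(qx+ry)^2+b(qx+ry)(sx+ty)+c(sx+ty)^2.
\end{equation*}

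Next I would collect the coefficient of $x^2$ term by term: the square $a(qx+ry)^2$ contributes $aq^2$; the cross term $b(qx+ry)(sx+ty)$ contributes $bqs$, the only way to produce an $x^2$ being the product $qx\cdot sx$; and $c(sx+ty)^2$ contributes $cs^2$. Summing these three contributions yields $aq^2+bqs+cs^2=Q(q,\,s)$, which is exactly the claimed coefficient.

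I expect no real obstacle in this argument—the only point requiring minimal care is correctly tracking which monomial products in the expansion give rise to an $x^2$, and this is immediate. Note that the hypotheses $Q\in\mathcal{Q}(D_\mathcal{O},\,N)$ and $\gamma\in\Gamma$ are not used beyond identifying $a,\,b,\,c$ as the entries of $Q$ and $q,\,r,\,s,\,t$ as the entries of $\gamma$; the identity in fact holds for an arbitrary integral binary quadratic form and an arbitrary $2\times2$ matrix.
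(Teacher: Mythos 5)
Your proof is correct and matches the paper's approach: the paper simply declares the lemma ``Straightforward,'' and the direct expansion of $a(qx+ry)^2+b(qx+ry)(sx+ty)+c(sx+ty)^2$ with collection of the $x^2$ terms is exactly the intended computation. Your remark that the identity holds for arbitrary forms and matrices is also accurate, as the hypotheses play no role in the calculation.
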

\begin{proof}
Straightforward. 
\end{proof}

By $M_{D_\mathcal{O},\,N}$ we mean the product of all prime factors $p$ of $N$ satisfying $\left(\frac{D_\mathcal{O}}{p}\right) \neq-1$, where $\left(\frac{\cdot}{p}\right)$ denotes the Kronecker symbol.

\begin{theorem}\label{actioncondition}
Let $\Gamma$ be a subgroup of $\mathrm{SL}_2(\mathbb{Z})$ containing $\Gamma_1(N)$. 
Then, $\Gamma$ acts on $\mathcal{Q}(D_\mathcal{O},\,N)$ if and only if
$\Gamma$ is contained in $\Gamma_0(M_{D_\mathcal{O},\,N})$. 
\end{theorem}
\begin{proof}
Assume that $\Gamma$ acts on $\mathcal{Q}(D_\mathcal{O},\,N)$. Suppose on the contrary that
$\Gamma$ is not contained in $\Gamma_0(M_{D_\mathcal{O},\,N})$. 
By Lemma \ref{pr}, there is a 
prime factor $p$ of $M_{D_\mathcal{O},\,N}$ and 
$\gamma=\begin{bmatrix}q&r\\s&t\end{bmatrix}\in\Gamma$ 
such that 
\begin{equation}\label{tptr}
p\,|\,q\quad\textrm{and}\quad p\,\nmid\,s. 
\end{equation}
There are three possible cases\,:
$\left\{\begin{array}{l}
\textrm{$p$ divides $D_\mathcal{O}$, or}\\
\textrm{$p$ is odd and $\displaystyle\left(\frac{D_\mathcal{O}}{p}\right)=1$, or}\\ 
\textrm{$p=2$ and $D_\mathcal{O}\equiv1\Mod{8}$}.
\end{array}\right.$
\begin{enumerate}
\item[Case 1.] First, consider the case where $p$ divides $D_\mathcal{O}$. 
\begin{enumerate}
\item[Case 1-1.] If $p$ is odd and $D_\mathcal{O}\equiv1\Mod{4}$, then we have
$p^2\equiv D_\mathcal{O}\Mod{4p}$, and hence
\begin{equation}\label{tD0t}
\frac{p^2-D_\mathcal{O}}{4}\equiv0\Mod{p}. 
\end{equation}
We then see that for $\displaystyle
Q=x^2+pxy+\frac{p^2-D_\mathcal{O}}{4}y^2$ ($\in\mathcal{Q}(D_\mathcal{O},\,N)$)
\begin{eqnarray*}
\textrm{the coefficient of $x^2$ in $Q^\gamma$}&=&q^2+pqs+\frac{p^2-D_\mathcal{O}}{4}s^2\quad\textrm{by 
Lemma \ref{x^2}}\\
&\equiv&0\Mod{p}\quad\textrm{by (\ref{tptr}) and (\ref{tD0t})}.
\end{eqnarray*}
\item[Case 1-2.] If $p$ is odd and $D_\mathcal{O}\equiv0\Mod{4}$, then we get that
$4p^2\equiv D_\mathcal{O}\Mod{4p}$, and so
\begin{equation}\label{4tD0t}
\frac{4p^2-D_\mathcal{O}}{4}\equiv0\Mod{p}.
\end{equation}
Observe that for $\displaystyle
Q=x^2+2pxy+\frac{4p^2-D_\mathcal{O}}{4}y^2$ ($\in\mathcal{Q}(D_\mathcal{O},\,N)$)
\begin{eqnarray*}
\textrm{the coefficient of $x^2$ in $Q^\gamma$}&=&q^2+2pqs+\frac{4p^2-D_\mathcal{O}}{4}s^2\quad\textrm{by 
Lemma \ref{x^2}}\\
&\equiv&0\Mod{p}\quad\textrm{by (\ref{tptr}) and (\ref{4tD0t})}.
\end{eqnarray*}
\item[Case 1-3.] If $p=2$ and $D_\mathcal{O}\equiv0\Mod{8}$, then 
$\displaystyle Q=x^2-\frac{D_\mathcal{O}}{4}y^2$ belongs to $\mathcal{Q}(D_\mathcal{O},\,N)$ and
\begin{eqnarray*}
\textrm{the coefficient of $x^2$ in $Q^\gamma$}&=&q^2-\frac{D_\mathcal{O}}{4}s^2\quad\textrm{by 
Lemma \ref{x^2}}\\
&\equiv&0\Mod{p}\quad\textrm{by (\ref{tptr}) and the fact $\frac{D_\mathcal{O}}{4}\equiv0\Mod{2}$}.
\end{eqnarray*}
\item[Case 1-4.] If $p=2$ and $D_\mathcal{O}\equiv4\Mod{8}$,
then
$\displaystyle Q=x^2+2xy+\frac{4-D_\mathcal{O}}{4}y^2$ belongs to $\mathcal{Q}(D_\mathcal{O},\,N)$ and
\begin{eqnarray*}
\textrm{the coefficient of $x^2$ in $Q^\gamma$}&=&q^2+2qs+\frac{4-D_\mathcal{O}}{4}s^2\quad\textrm{by 
Lemma \ref{x^2}}\\
&\equiv&0\Mod{p}\quad\textrm{by (\ref{tptr}) and the fact $\frac{4-D_\mathcal{O}}{4}\equiv0\Mod{2}$}.
\end{eqnarray*}
\end{enumerate}
\item[Case 2.] Second, consider the case where $p$ is odd and $\displaystyle\left(\frac{D_\mathcal{O}}{p}\right)=1$.
Then, the quadratic congruence
$x^2\equiv D_\mathcal{O}\Mod{4p}$ has an integer solution, say $x=b_0$.
We see that
\begin{equation}\label{b0}
\frac{b_0^2-D_\mathcal{O}}{4}\equiv0\Mod{p}.
\end{equation}
And, we find that for $\displaystyle Q=x^2+b_0xy+\frac{b_0^2-D_\mathcal{O}}{4}y^2$ ($\in\mathcal{Q}(D_\mathcal{O},\,N)$)
\begin{eqnarray*}
\textrm{the coefficient of $x^2$ in $Q^\gamma$}&=&q^2+b_0qs+\frac{b_0^2-D_\mathcal{O}}{4}s^2\quad\textrm{by 
Lemma \ref{x^2}}\\
&\equiv&0\Mod{p}\quad\textrm{by (\ref{tptr}) and (\ref{b0})}.
\end{eqnarray*}
\item[Case 3.] Third, consider the case where $p=2$ and $D_\mathcal{O}\equiv1\Mod{8}$. Then we claim that
$\displaystyle Q=x^2+xy+\frac{1-D_\mathcal{O}}{4}y^2$ belongs to $\mathcal{Q}(D_\mathcal{O},\,N)$ and
\begin{eqnarray*}
\textrm{the coefficient of $x^2$ in $Q^\gamma$}&=&q^2+qs+\frac{1-D_\mathcal{O}}{4}s^2\quad\textrm{by 
Lemma \ref{x^2}}\\
&\equiv&0\Mod{p}\quad\textrm{by (\ref{tptr}) and the fact $\frac{1-D_\mathcal{O}}{4}\equiv0\Mod{2}$}.
\end{eqnarray*}
\end{enumerate}
Each case contradicts the assumption that $\Gamma$ acts on $\mathcal{Q}(D_\mathcal{O},\,N)$. 
Therefore we conclude that $\Gamma$ must be contained in $\Gamma_0(M_{D_\mathcal{O},\,N})$. 
\par
Conversely, assume that $\Gamma$ is contained in $\Gamma_0(M_{D_\mathcal{O},\,N})$. If $N=1$, then the assertion is obvious. 
Now, we let $N\geq2$. 
Let $Q=ax^2+bxy+cy^2\in\mathcal{Q}(D_\mathcal{O},\,N)$ and $\gamma=\begin{bmatrix}q&r\\s&t\end{bmatrix}\in\Gamma$.
Let $a'$ be the coefficient of $x^2$ in $Q\left(\gamma\begin{bmatrix}x\\y\end{bmatrix}\right)$. 
For each prime factor $p$ of $N$, we have either
$p\,|\,s$ or $p\,\nmid\,s$. 
\begin{enumerate}
\item[Case (i)] If $p\,|\,s$, then we derive that
\begin{eqnarray*}
\gcd(a',\,p)
&=&\gcd(aq^2+bqs+cs^2,\,p)
\quad\textrm{by Lemma \ref{x^2}}\\
&=&\gcd(aq^2,\,p)\\
&=&1\quad\textrm{because}~\gcd(a,\,N)=1~\textrm{and}~\det(\gamma)=1~\textrm{implies}~p\,\nmid\,q.
\end{eqnarray*}
\item[Case (ii)] If $p\,\nmid\,s$, then $p\,\nmid\,M_{D_\mathcal{O},\,N}$ since $\gamma\in\Gamma_0(M_{D_\mathcal{O},\,N})$.  
Thus we have
\begin{equation}\label{Dt}
\left(\frac{D_\mathcal{O}}{p}\right)=-1.
\end{equation}
\begin{enumerate}
\item[Case (ii)-1.] Consider the case where $p$ is odd.
The discriminant of the quadratic congruence
$ax^2+bsx+cs^2\equiv0\Mod{p}$ with $a,\,s\not\equiv0\Mod{p}$
is 
\begin{equation*}
(bs)^2-4a(cs^2)=s^2(b^2-4ac)=s^2D_\mathcal{O}. 
\end{equation*}
Since $\displaystyle\left(\frac{s^2D_\mathcal{O}}{p}\right)=
\left(\frac{D_\mathcal{O}}{p}\right)=-1$ by (\ref{Dt}), the quadratic congruence has no integer solution. 
It then follows from Lemma \ref{x^2} that
\begin{equation*}
\gcd(a',\,p)=\gcd(aq^2+bqs+cs^2,\,p)=1. 
\end{equation*}
\item[Case (ii)-2.] If $p=2$, then we get by (\ref{Dt}) that $D_\mathcal{O}\equiv5\Mod8$ and so $2\,\nmid\,c$.
Thus we deduce that
\begin{eqnarray*}
a'&=&aq^2+bqs+cs^2\quad\textrm{by 
Lemma \ref{x^2}}\\
&\equiv&q^2+bq+1\Mod{2}\quad\textrm{because}~2\,\nmid\,a, ~2\,\nmid\,s,~2\,\nmid\,c\\
&\equiv&q^2+q+1\Mod{2}\quad\textrm{since $b$ is odd by the fact $b^2-4ac=D_\mathcal{O}\equiv5\Mod{8}$}\\
&\equiv&1\Mod{2},
\end{eqnarray*}
and hence $\gcd(a',\,p)=1$. 
\end{enumerate}
\end{enumerate}
This observation yields that $\Gamma$ acts on $\mathcal{Q}(D_\mathcal{O},\,N)$. 
\end{proof}

\bibliographystyle{amsplain}

\address{
Department of Mathematical Sciences \\
KAIST \\
Daejeon 34141\\
Republic of Korea} {jkgoo@kaist.ac.kr}
\address{
Department of Mathematics\\
Hankuk University of Foreign Studies\\
Yongin-si, Gyeonggi-do 17035\\
Republic of Korea} {dhshin@hufs.ac.kr}
\address{
Department of Mathematics Education\\
Pusan National University\\
Busan 46241\\Republic of Korea}
{dsyoon@pusan.ac.kr}
\end{document}